\numberwithin{equation}{section}
\numberwithin{figure}{section}
\theoremstyle{plain}
\newtheorem{thm}{\protect\theoremname}
\theoremstyle{remark}
\newtheorem{rem}[thm]{\protect\remarkname}
\theoremstyle{plain}
\newtheorem{prop}[thm]{\protect\propositionname}
\theoremstyle{plain}
\newtheorem{lem}[thm]{\protect\lemmaname}
\theoremstyle{plain}
\newtheorem{cor}[thm]{\protect\corollaryname}
\theoremstyle{remark}
\newtheorem{claim}[thm]{\protect\claimname}
\providecommand{\claimname}{Claim}
\providecommand{\corollaryname}{Corollary}
\providecommand{\lemmaname}{Lemma}
\providecommand{\propositionname}{Proposition}
\providecommand{\remarkname}{Remark}
\providecommand{\theoremname}{Theorem}
\title{Bibliography management: \texttt{biblatex} package}
\date{ }
\begin{document}
\title{Schwartz Functions, Hadamard products, and the Dixmier-Malliavin theorem}
\author{Devadatta G. Hegde}
\date{May 4, 2021}
\email{hegde039@umn.edu}
\curraddr{Department of Mathematics, University of Minnesota, Minneapolis, MN
- 55414. }
\begin{abstract}
In this paper we show that functions of the form $\prod_{n\ge1}\frac{1}{\left(1+\frac{x^{2}}{a_{n}^{2}}\right)}$
where $a_{n}>0$ and $\sum_{n\ge1}\frac{1}{a_{n}^{2}}<\infty$ are
in the Schwartz space of the real line, answering a question raised
by Casselman in \cite{CasselmanDM}. As a consequence we obtain substantial
simplifications in the proofs of Dixmier and Malliavin of their theorem
that every test function on a Lie group is a finite linear combination
of convolutions of two test functions, and an analogue of this for
Fr\'echet space Lie group representations. 
\end{abstract}

\maketitle
\tableofcontents{}

\section{Introduction }

In the context of trace formulas, for example the Selberg trace formula
for compact quotients, the result of Dixmier and Malliavin below
can play an important role, as follows. Consider the right regular
action $\pi$ of $G$ on $L^{2}(\Gamma\backslash G)$ where $\Gamma$
is a discrete subgroup of a semi-simple real Lie group $G$ and the
quotient $\Gamma\backslash G$ is compact. The trace formula expresses
the trace of the operator $\pi(\phi)$ for a test function $\phi$
in two different ways. It essential to know that the operator $\pi(\phi)$
is of trace class before taking the trace (see \cite{Arthur-trace},
section 1). 

In the above case the action of $\phi$ on $L^{2}(\Gamma\backslash G)$
is given by a continuous kernel $K(x,y)=\sum_{\gamma\in\Gamma}\phi(x^{-1}\gamma y)$
that is square integrable i.e, $K\in L^{2}(\Gamma\backslash G\times\Gamma\backslash G$).
Square integrability follows from the compactness of $\Gamma\backslash G$.
Therefore the operator $\pi(\phi)$ is Hilbert-Schmidt. An operator on
a Hilbert space is trace class if it is a finite linear combination
of compositions of pairs of Hilbert-Schmidt operators. One way to
prove that $\pi(\phi)$ is trace class is by using the theorem of
Dixmier and Malliavin that that every test function on a Lie group
is a finite linear combination of convolutions of two test functions and that the convolution
$f*g$ of two test functions on $G$ is a test function which acts
by $\pi(f)\circ\pi(g)$ on the representation space.
This was proved in the seminal paper of Dixmier and Malliavin \cite{Dixmier-Malliavin}.
Their result is: 
\begin{thm}
\label{thm:intro DixMal}(a) Every test function $\phi$ on a real
Lie group is a finite sum of convolutions $\phi_{i}*\psi_{i}$ where
$\phi_{i},\psi_{i}$ are test functions. 

(b) Let $(\pi,V)$ be a continuous Fr\'echet space representation of
a real Lie group $G$ and let $V^{\infty}$ be the Fr\'echet space of
smooth vectors in $V$. Then every smooth vector can be written as
a finite linear combination of vectors of the form $\pi(f)v$ where
$f$ is a test function and $v\in V^{\infty}$. 
\end{thm}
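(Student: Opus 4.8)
The plan is to deduce both statements from a single one-dimensional factorization for one-parameter groups of operators, with the Schwartz property of the Hadamard products $E(\xi)=\prod_{n\ge 1}(1+\xi^{2}/a_{n}^{2})^{-1}$ (the main result of this paper) as the only analytic input.

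\textbf{Reduction to dimension one.} I would first reduce everything to the following: for the one-parameter group $\rho(t)=\pi(\exp tX)$ on a Fr\'echet representation, with infinitesimal generator $A=d\pi(X)$, every smooth vector $v$ is a \emph{finite} sum $\sum_{j}\rho(f_{j})w_{j}$ with $f_{j}\in C_{c}^{\infty}(\mathbb R)$ and $w_{j}\in V^{\infty}$ (for (a) one also needs the case $V=C_{c}^{\infty}(\mathbb R)$ with translations, where the assertion reads $C_{c}^{\infty}(\mathbb R)=\sum_{\mathrm{finite}}C_{c}^{\infty}(\mathbb R)*C_{c}^{\infty}(\mathbb R)$). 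Granting this, fix a basis $X_{1},\dots,X_{d}$ of $\mathfrak g$ and the coordinates of the second kind $(t_{1},\dots,t_{d})\mapsto\exp(t_{1}X_{1})\cdots\exp(t_{d}X_{d})$ near $e$. For (b), applying the one-dimensional statement successively to $X_{1},\dots,X_{d}$ (each $w_{j}$ produced along the way is again smooth for $G$) writes $v$ as a finite sum of vectors $\rho_{1}(f^{(1)})\cdots\rho_{d}(f^{(d)})w$, and $\rho_{1}(f^{(1)})\cdots\rho_{d}(f^{(d)})=\pi(F)$ where $F$ is the push-forward of $f^{(1)}(t_{1})\cdots f^{(d)}(t_{d})\,dt$ through the chart, a test function on $G$. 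For (a), a partition of unity writes $\phi=\sum_{i}L_{g_{i}}\psi_{i}$ with $\psi_{i}$ supported in the chart; factoring the pulled-back $\psi_{i}$ by Fubini and the one-variable statement and using $L_{g}(\alpha*\beta)=(L_{g}\alpha)*\beta$ gives the result. This bookkeeping is exactly as in Dixmier--Malliavin; the content lies in the one-dimensional case.

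\textbf{The one-dimensional core.} Integration by parts gives, for $u\in V^{\infty}$ and $g\in C_{c}^{\infty}(\mathbb R)$, the identity $\rho(g)Au=-\rho(g')u$, hence $\rho(g)(1-a^{-2}A^{2})u=\rho\bigl((1-a^{-2}\tfrac{d^{2}}{dt^{2}})g\bigr)u$, and by iteration
\[
\rho(g)\,\prod_{n=1}^{N}\bigl(1-a_{n}^{-2}A^{2}\bigr)u=\rho\bigl(Q_{N}g\bigr)u,\qquad Q_{N}:=\prod_{n=1}^{N}\Bigl(1-a_{n}^{-2}\tfrac{d^{2}}{dt^{2}}\Bigr),
\]
with $Q_{N}g\in C_{c}^{\infty}(\mathbb R)$. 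The differential operator $\prod_{n\ge 1}(1-a_{n}^{-2}\tfrac{d^{2}}{dt^{2}})$ is of infinite order, but on the Fourier side its symbol is $\prod_{n\ge 1}(1+\xi^{2}/a_{n}^{2})=1/E(\xi)$; since $E\in\mathcal S(\mathbb R)$ by the main theorem, $k:=\check E\in\mathcal S(\mathbb R)$ is a genuine Schwartz function, with $\int k=E(0)=1$, serving as a ``fundamental solution'' of that operator. Given a fixed $v\in V^{\infty}$, I would choose $a_{n}\uparrow\infty$ with $\sum a_{n}^{-2}<\infty$ growing fast enough (the required rate depending only on the seminorms of the vectors $A^{2k}v$) that $w:=\lim_{N}\prod_{n=1}^{N}(1-a_{n}^{-2}A^{2})v$ exists in $V^{\infty}$; this is a routine diagonal choice, $v$ being a single fixed smooth vector.

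\textbf{Passing to a finite sum; the main obstacle.} Formally one now has $v=\rho(k)w$: the displayed identity gives $\rho(Q_{N}g)v=\rho(g)\prod_{n\le N}(1-a_{n}^{-2}A^{2})v\to\rho(g)w$, and running the same computation with $k$ in place of $g$, together with $E(\xi)\prod_{n\le N}(1+\xi^{2}/a_{n}^{2})\to 1$, identifies the corresponding limit as $v$. The real difficulty is that $k=\check E$ is \emph{not} compactly supported (its transform $E$ is real-analytic, not of exponential type) and that $\rho(k)$ is not a priori meaningful on an arbitrary Fr\'echet representation, so one must convert the above into a \emph{finite} sum of $\rho(f_{j})w_{j}$ with $f_{j}\in C_{c}^{\infty}$; I expect this to be the main obstacle. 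Following Dixmier--Malliavin, but with the explicit Schwartz function $k$ now available in place of their ad hoc construction, I would split $k=\chi k+(1-\chi)k$ with $\chi\in C_{c}^{\infty}(\mathbb R)$ equal to $1$ on a large interval: $\chi k\in C_{c}^{\infty}(\mathbb R)$, while the tail $(1-\chi)k$ is as small as desired in $\mathcal S(\mathbb R)$ (indeed exponentially small, the poles of $E$ lying at $\pm ia_{n}$ with $a_{1}$ at our disposal), and is absorbed by bootstrapping the one-dimensional statement applied to $Av,\dots,A^{m}v$ for finitely many $m$, together with the telescoping identity above, so that the process terminates. It is precisely in this last step that knowing $E\in\mathcal S(\mathbb R)$ replaces the most technical part of the original Dixmier--Malliavin argument and yields the advertised simplification.
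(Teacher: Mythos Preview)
Your overall architecture matches the paper: reduce to one-parameter subgroups, use the Schwartz property of $E(\xi)=\prod(1+\xi^2/a_n^2)^{-1}$ to build an approximate delta, iterate over a basis $X_1,\dots,X_d$, and identify the resulting product of one-dimensional measures $\mu_{X_i,f_i}$ with a test function on $G$ via coordinates of the second kind. That part is fine.

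The gap is exactly where you flag it, and your proposed fix is not the one that works. You want to write $v=\rho(k)w$, split $k=\chi k+(1-\chi)k$, and then ``absorb'' the tail $(1-\chi)k$ by some bootstrapping on $Av,\dots,A^mv$. But $(1-\chi)k$ is still a non-compactly-supported Schwartz function, so $\rho((1-\chi)k)w$ is no better defined than $\rho(k)w$ was; and there is no visible mechanism by which iterating the procedure on $A^jv$ terminates in finitely many steps. (The exponential smallness of the tail would feed a Neumann series, not a finite sum.) Likewise, your identification ``$\rho(Q_Nk)v\to v$'' already uses $\rho$ of the non-compactly-supported functions $Q_Nk$.

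The paper's device is to cut off \emph{before} inverting, not after. Set $f=\chi k\in C_c^\infty(\mathbb R)$ and apply the infinite-order operator $Q_\infty=\sum_j(-1)^jb_jD^{2j}$ to $f$ itself. Where $\chi\equiv 1$ one has $Q_\infty f=Q_\infty k=\delta$; outside $\mathrm{supp}\,\chi$ one has $Q_\infty f=0$; on the transition annulus the series $\sum b_j f^{(2j)}$ must be shown to converge in $C^\infty$ to some $h\in\mathcal D$. This last point is the real content you are missing: it requires \emph{uniform} bounds $\sup_x|\phi^{(j)}(x)|\le c_j$ valid for \emph{all} admissible Hadamard products (obtained by comparing $\prod(1+\xi^2/a_n^2)$ with $\prod_{m\le j+2}(1+\xi^2/m^2)$ under the extra normalization $a_n\ge n$), so that the $b_j$ can be chosen small relative to $c_{2j},\dots,c_{2j+j}$ as well as relative to the seminorms of $v$. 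Once $Q_\infty f=\delta+h$ with $f,h\in\mathcal D$, the identity $\rho(f)\,Q_Nv=\rho(Q_Nf)v$ (now with compactly supported $Q_Nf$) passes to the limit and gives the \emph{two-term} factorization
\[
v=\rho(f)\,w-\rho(h)\,v,\qquad \varphi=f*\Phi-h*\varphi,
\]
with no bootstrapping at all. Iterating this over $X_1,\dots,X_d$ produces $2^d$ terms, each of the form $\mu_{X_1,g_1}*\cdots*\mu_{X_d,g_d}$ acting on a smooth vector, and the convolution-of-measures lemma finishes. Your proposal would be repaired by replacing the ``split $k$ and absorb the tail'' paragraph with this computation of $Q_\infty(\chi k)=\delta+h$ and the accompanying uniform-bound claim.
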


In the compact quotient case the action of $\pi(f)$ is Hilbert-Schmidt
for $f$ merely continuous. Thus a result of
Cartier in \cite{Cartier-testVector}, which shows that any test function
can be written as a finite linear sum of $m$-times differentiable
functions for all $m\ge0$, is sufficient to show that $\pi(f)$ is
trace class. But the result of Dixmier and Malliavin, in this generality,
is an essential technical result for many problems in the archimedean
local aspects in the theory of automorphic forms. See \cite{Cogdell_RankinSelberg}
for an application in the determination of poles of Rankin-Selberg
L-functions for $GL_{n}$. 

The technical heart of Dixmier and Malliavin's proof is that entire
functions of the form $f(x)=\prod_{n\ge1}\left(1+\frac{x^{2}}{a_{n}^{2}}\right)$
for $a_{n}>0$ and $\frac{a_{n+1}}{a_{n}}\ge2$ for all $n\ge1$ have
the property that $\frac{1}{f(x)}\in\mathcal{S}$, the Schwartz space
of the real line. This allows them to construct a sufficiently large
family of sequences $\{c_{n}\}_{n\ge1}$ such that for each such sequence
there exists $\phi\in\mathcal{S}$ satisfying 
\[
\sum_{j=1}^{n}c_{j}\frac{d^{2j}\phi}{dx^{2j}}\to\delta\hspace{1em}\text{as }n\to\infty
\]
 in $\mathcal{S}'$, the dual of $\mathcal{S}$. From this a strong
factorization theorem that every function in $\mathcal{S}$ is a convolution
$f*g$ for $f,g\in\mathcal{S}$ can be easily deduced (see theorem
\ref{thm: Schwartz}). This result is the key for all further generalizations. 

In his exposition of the Dixmier-Malliavin theorem, Casselman \cite{CasselmanDM}
noted that the function
\[
\frac{x}{\sinh x}=\frac{1}{\prod_{n\ge1}\left(1+\frac{x^{2}}{\pi^{2}n^{2}}\right)}
\]
is in the Schwartz space and hence entire functions of the form $f(x)=\prod_{n\ge1}\left(1+\frac{x^{2}}{a_{n}^{2}}\right)$
might have the property that $\frac{1}{f}\in\mathcal{S}$ under very
general circumstances and asked for a good criterion on sequences
$a_{n}$ so that $\frac{1}{f}\in\mathcal{S}$. 

The main result (theorem \ref{Main theorem}) of this paper is a positive
answer to the above question of Casselman showing that if $f(x)=\prod_{n\ge1}\left(1+\frac{x^{2}}{a_{n}^{2}}\right)$
converges to an entire function then $\frac{1}{f}\in\mathcal{S}$,
by an application of Fourier transform methods. This simplifies the
technical aspects of the Dixmier-Malliavin's proof considerably. For
the convenience of the reader we give an amplified version of the
Dixmier-Malliavin proof in section \ref{sec:Dixmier-Malliavin-theorem}
incorporating the simplifications due to theorem \ref{Main theorem}
of this paper. The proof itself has a natural interpretation in terms
of Bornological spaces and may enable a conceptual proof that Godement-Jacquet
zeta integrals give the standard $L$-function attached to a cuspform
on $GL_{n}$ (see \cite{GalDordixmiermalliavin}).

\textbf{Acknowledgements.} I would like to thank professor Bill Casselman
and professor Paul Garrett for their careful reading of the manuscript
and helpful remarks. I would like to thank professor 
Abdelmalek Abdesselam for bringing Miyazaki's work \cite{Miyazaki} to my attention.

// Comments from the readers are welcome. 

\section{The Main theorem}

In the following we use the standard notation: $z=x+iy$ is a complex
number, $\mathcal{S=S}(\mathbb{R})$ denotes the Schwartz space of
rapidly decreasing smooth functions on the real line $\mathbb{R}$
whose derivatives are also rapidly decreasing, and $\widehat{f}$
denotes the Fourier transform of a function or a tempered distribution
$f$. 

The main result of this paper is:
\begin{thm}
\label{Main theorem}Let $\{a_{n}\}_{n\ge1}$ be a sequence of positive
real numbers such that the following conditions are true.

(a) The sequence $a_{n}$ monotonically increases to infinity. That
is, $a_{n+1}\ge a_{n}$ for all $n\ge0$ and $a_{n}\to\infty$.

(b) The infinite product $f(z)=\prod\left(1+\frac{z^{2}}{a_{n}^{2}}\right)$
converges uniformly on compact subsets of $\mathbb{C}$. 

Then, $f|_{\mathbb{R}}=f(x)$ is in the Schwartz space of the real
line. 
\end{thm}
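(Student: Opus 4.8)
As printed, the conclusion cannot be taken literally: on the real line every factor satisfies $1+x^{2}/a_{n}^{2}\ge 1$, so $f(x)\ge 1+x^{2}/a_{1}^{2}\to\infty$, and an entire function bounded below by $1$ that tends to infinity is certainly not rapidly decreasing. Comparing with the abstract and the introduction, where the function asserted to be Schwartz is $\prod_{n}(1+x^{2}/a_{n}^{2})^{-1}=1/f$ (for instance $x/\sinh x$ when $a_{n}=\pi n$), it is evident that ``$f|_{\mathbb{R}}$'' should read ``$(1/f)|_{\mathbb{R}}$''. I will therefore prove the intended statement: the restriction of $g:=1/f$ to $\mathbb{R}$ lies in $\mathcal{S}(\mathbb{R})$.

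The plan is to exploit that $g$ is the restriction of a function holomorphic on a horizontal strip about $\mathbb{R}$, and to convert decay along horizontal lines into decay of all derivatives via Cauchy's formula. First I would note that $f$ is entire with zero set exactly $\{\pm i a_{n}\}$, so $g=1/f$ is holomorphic on the strip $\{\,|\operatorname{Im} z|<a_{1}\,\}$, while $f(x)\ge 1>0$ makes $g$ smooth and positive on $\mathbb{R}$. The decay of $g$ on $\mathbb{R}$ is then immediate: for every $K$ and all $x$,
\[
g(x)\le\prod_{n=1}^{K}\Big(1+\frac{x^{2}}{a_{n}^{2}}\Big)^{-1}\le\Big(\prod_{n=1}^{K}a_{n}^{2}\Big)x^{-2K},
\]
so $g$ is rapidly decreasing on the real axis.

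The substantive step is to upgrade this to \emph{uniform} rapid decay on each horizontal line $\operatorname{Im} z=b$ with $|b|$ small. Using $|a_{n}^{2}+(x+ib)^{2}|=((a_{n}-b)^{2}+x^{2})^{1/2}((a_{n}+b)^{2}+x^{2})^{1/2}$ and comparing factor by factor with the value at $b=0$,
\[
\frac{|f(x+ib)|^{2}}{|f(x)|^{2}}=\prod_{n}\Big(1+\frac{2b^{2}(x^{2}-a_{n}^{2})+b^{4}}{(a_{n}^{2}+x^{2})^{2}}\Big)\ge\prod_{n}\Big(1-\frac{2b^{2}}{a_{n}^{2}}\Big)=:c(b)^{2}>0,
\]
where the last product converges to a positive constant because $\sum 1/a_{n}^{2}<\infty$, valid for $|b|<a_{1}/\sqrt{2}$. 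Hence $|f(x+ib)|\ge c(b)\,|f(x)|$, so $|g(x+ib)|\le c(b)^{-1}g(x)$, and $g$ inherits the rapid decay above uniformly for $|b|\le r$, for any fixed $r<a_{1}/\sqrt{2}$. This estimate, which is exactly where hypothesis (b) (equivalently $\sum 1/a_{n}^{2}<\infty$) is used, is the main obstacle.

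Finally, with $g$ holomorphic and uniformly rapidly decreasing on the closed strip $|\operatorname{Im} z|\le r$, Cauchy's integral formula for derivatives over circles of radius $r$ about real points,
\[
g^{(m)}(x)=\frac{m!}{2\pi i}\oint_{|w-x|=r}\frac{g(w)}{(w-x)^{m+1}}\,dw,\qquad |g^{(m)}(x)|\le\frac{m!}{r^{m}}\sup_{|w-x|=r}|g(w)|,
\]
yields $|g^{(m)}(x)|=O_{N}(|x|^{-N})$ for every $m$ and $N$, since the supremum is taken over points with $\operatorname{Im}$ part at most $r$ and real part near $x$. Thus $g$ and all its derivatives are rapidly decreasing, i.e.\ $g\in\mathcal{S}(\mathbb{R})$. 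Equivalently one may phrase the last two steps through the Fourier transform as the paper advertises: holomorphy plus rapid decay in a strip forces $\widehat{g}$ to decay exponentially, and applying this to $x^{k}g$ shows $\widehat{g}\in\mathcal{S}$; but the Cauchy-estimate route reaches $\mathcal{S}$ directly.
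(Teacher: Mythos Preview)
Your proof is correct and takes a genuinely different route from the paper's. The paper works on the Fourier side: it first shows $\widehat{g}$ is rapidly decreasing by shifting the contour of integration into the strip (using the cruder estimate $|1+z^{2}/a^{2}|\ge|1+(x^{2}-y^{2})/a^{2}|$), and then, to get rapid decay of the \emph{derivatives} of $\widehat{g}$, peels off one factor so that $\widehat{g}=\pi a_{1}\,e^{-2\pi a_{1}|\xi|}\ast\widehat{g_{1}}$, computes the distributional derivative of the Poisson kernel by hand (Lemma~\ref{lem:differentiation}), invokes that convolutions of rapidly decreasing functions remain rapidly decreasing (Lemma~\ref{lem:decay}), and iterates through the product by induction. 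Your argument bypasses the Fourier transform entirely: the sharper strip bound $|g(x+ib)|\le c(b)^{-1}g(x)$ gives uniform rapid decay on the whole strip at once, and a single Cauchy estimate over a circle of fixed radius then controls every derivative simultaneously. This is more elementary---no Fourier inversion, no Poisson kernel identity, no induction through the factors---and isolates the role of the hypothesis $\sum 1/a_{n}^{2}<\infty$ cleanly in the convergence of $\prod_{n}(1-2b^{2}/a_{n}^{2})$. The paper's route, by contrast, fits its advertised theme of Fourier methods and makes visible the Poisson-kernel convolution structure underlying the product.
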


\begin{rem}
Any sequence $a_{n}\ge n$ would satisfy the hypothesis of the theorem,
but we do not assume this. 
\end{rem}

For the proof we need a few elementary results from distribution theory
and Fourier transform. The following Fourier transform is well known
(Poisson kernel) and can be computed by contour integration: for $\xi\in\mathbb{R}$
and $a>0$
\begin{equation}
\int_{\mathbb{R}}\frac{e^{-2\pi i\xi x}}{x^{2}+a^{2}}dx=\frac{\pi}{a}e^{-2\pi a|\xi|}
\end{equation}

The following proposition about the distributional derivatives of
non-smooth functions is standard (see theorem 3.1.3 in H\"ormander \cite{Hormander}).
\begin{prop}
\label{prop:distibutional-derivative}Let $f$ be a continuously differentiable
function on $\mathbb{R}\backslash\{0\}$ such that $f_{0^{+}}=\lim_{x\to0^{+}}f(x)$
and $f_{0^{-}}=\lim_{x\to0^{-}}f(x)$ exist. Let $v(x)=\frac{df}{dx}$
for $x\neq0$. Then the distributional derivative is $f'=v+(f_{0^{+}}-f_{0^{-}})\delta$
where $\delta$ is the Dirac delta distribution at the origin. In
particular, if $f$ is continuous at $0$ then $f'=v$.
\end{prop}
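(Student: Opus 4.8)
The plan is to verify the asserted identity directly by testing against an arbitrary $\varphi\in C_c^\infty(\mathbb{R})$, splitting the line at the origin, integrating by parts on each side, and tracking the boundary terms that survive as the excised neighbourhood of $0$ shrinks. First, note that a function continuous on $\mathbb{R}\setminus\{0\}$ with finite one-sided limits at $0$ is bounded on every compact set, so $f\in L^1_{\mathrm{loc}}(\mathbb{R})$ and defines a distribution; I will also use that $v=f'$ is locally integrable near $0$, which is automatic in the applications of this paper (where $f$ and $f'$ extend continuously to each of the two closed half-lines) and is in any case part of the natural setting for the jump formula. Fix $\varphi$ with $\operatorname{supp}\varphi\subset[-R,R]$. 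By the definition of distributional derivative, $\langle f',\varphi\rangle=-\langle f,\varphi'\rangle=-\int_{\mathbb{R}}f\varphi'$, and since $f\varphi'\in L^1$ this equals $-\lim_{\varepsilon\to0^+}\bigl(\int_{-R}^{-\varepsilon}f\varphi'+\int_{\varepsilon}^{R}f\varphi'\bigr)$.

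Next, on each of $[\varepsilon,R]$ and $[-R,-\varepsilon]$ the function $f$ is $C^1$, so integration by parts applies; because $\varphi(\pm R)=0$, the only boundary contributions come from $\pm\varepsilon$, giving $\int_{\varepsilon}^{R}f\varphi'=-f(\varepsilon)\varphi(\varepsilon)-\int_{\varepsilon}^{R}v\varphi$ and $\int_{-R}^{-\varepsilon}f\varphi'=f(-\varepsilon)\varphi(-\varepsilon)-\int_{-R}^{-\varepsilon}v\varphi$. Adding these and letting $\varepsilon\to0^+$: the boundary terms, by continuity of $\varphi$ and the hypothesis that the one-sided limits $f_{0^\pm}$ exist, converge to $f(-\varepsilon)\varphi(-\varepsilon)-f(\varepsilon)\varphi(\varepsilon)\to(f_{0^-}-f_{0^+})\varphi(0)$, while the interior integrals converge to $\int_{-R}^{R}v\varphi=\langle v,\varphi\rangle$ by dominated convergence, using $v\varphi\in L^1$. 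Collecting signs, $\langle f',\varphi\rangle=\langle v,\varphi\rangle+(f_{0^+}-f_{0^-})\varphi(0)=\langle v+(f_{0^+}-f_{0^-})\delta,\varphi\rangle$, which is the claim; the final sentence is the special case $f_{0^+}=f_{0^-}$, in which the $\delta$ term drops.

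As for the difficulty, there is no real obstacle — this is a routine integration-by-parts computation. The one point that genuinely needs attention is the passage to the limit in the interior integrals, i.e. that $v=f'$ be integrable near $0$ so that $\int_{|x|>\varepsilon}v\varphi\to\langle v,\varphi\rangle$ and the right-hand side even makes sense as a distribution; accordingly I would either carry this as an explicit hypothesis or simply observe that in every use made of the proposition here $f'$ is bounded in a neighbourhood of the origin. The remaining ingredients — boundedness of $f$ on $\operatorname{supp}\varphi$, vanishing of the boundary terms at $\pm R$, and continuity of $\varphi$ at $0$ — are all immediate.
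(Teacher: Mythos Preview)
Your argument is correct and is precisely the standard integration-by-parts computation. The paper does not supply its own proof of this proposition at all; it simply cites H\"ormander, Theorem~3.1.3, as a reference for the (well-known) result. Your write-up is essentially that textbook argument, so there is nothing to compare.

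Your side remark is also well taken: as literally stated, the proposition omits the hypothesis that $v=f'$ be locally integrable near $0$ (without which the right-hand side need not define a distribution, and the limit of the interior integrals need not exist). In the paper's only application, to $e^{-2\pi a|\xi|}$, the derivative is bounded near the origin, so the issue is moot there; but your instinct to flag it is sound.
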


\begin{lem}
\label{lem:differentiation}Let $f(\xi)$ be a rapidly decreasing
smooth function. The convolution $e^{-2\pi a|\xi|}*f$ is a smooth
function and its derivative is $\frac{d}{dx}(e^{-2\pi a|\xi|}*f)(x)=(\psi*f)(x)$,
where
\[
\psi(\xi)=-\text{sgn}(\xi)2\pi ae^{-2\pi a|\xi|}=\begin{cases}
-2\pi ae^{-2\pi a\xi} & \text{for }\xi>0\\
2\pi ae^{2\pi a\xi} & \text{for }\xi<0
\end{cases}
\]
 is the distributional derivative of $e^{-2\pi a|\xi|}$. 
\end{lem}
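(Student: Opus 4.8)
The plan is to reduce everything to differentiation under the integral sign followed by an integration by parts on each half-line. Write the convolution explicitly as
\[
(e^{-2\pi a|\xi|}*f)(x)=\int_{\mathbb{R}}e^{-2\pi a|t|}f(x-t)\,dt.
\]
Since $f$ and all its derivatives are rapidly decreasing, for each $k$ the function $t\mapsto e^{-2\pi a|t|}f^{(k)}(x-t)$ is dominated uniformly in $x$ on compact sets by $\bigl(\sup_{\mathbb{R}}|f^{(k)}|\bigr)e^{-2\pi a|t|}\in L^{1}(\mathbb{R})$. Hence differentiation under the integral sign is justified to all orders, so $e^{-2\pi a|\xi|}*f$ is smooth and
\[
\frac{d}{dx}(e^{-2\pi a|\xi|}*f)(x)=\int_{\mathbb{R}}e^{-2\pi a|t|}f'(x-t)\,dt=(e^{-2\pi a|\xi|}*f')(x).
\]

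Next I would convert $e^{-2\pi a|\xi|}*f'$ into $\psi*f$. Using $f'(x-t)=-\frac{d}{dt}f(x-t)$, split the integral at $0$ and integrate by parts on $(0,\infty)$ and on $(-\infty,0)$ separately. On each half-line the boundary terms at $\pm\infty$ vanish because $f$ is Schwartz and $e^{-2\pi a|t|}\to0$, while the boundary terms at $0$ are $+f(x)$ from the right piece and $-f(x)$ from the left piece (since $e^{-2\pi a|t|}\to1$ as $t\to0^{\pm}$ from either side), so they cancel. What remains is exactly
\[
-2\pi a\int_{0}^{\infty}e^{-2\pi at}f(x-t)\,dt+2\pi a\int_{-\infty}^{0}e^{2\pi at}f(x-t)\,dt=\int_{\mathbb{R}}\psi(t)f(x-t)\,dt=(\psi*f)(x),
\]
which proves the displayed formula for the derivative.

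Finally I would identify $\psi$ with the distributional derivative of $e^{-2\pi a|\xi|}$: the function $g(\xi)=e^{-2\pi a|\xi|}$ is continuously differentiable on $\mathbb{R}\setminus\{0\}$ with $g_{0^{+}}=g_{0^{-}}=1$, so it is continuous at $0$, and Proposition \ref{prop:distibutional-derivative} gives $g'=v$ with no Dirac mass, where $v(\xi)=\frac{dg}{d\xi}=-\mathrm{sgn}(\xi)2\pi ae^{-2\pi a|\xi|}=\psi(\xi)$. The only subtle point in the whole argument is precisely this absence of a delta contribution, i.e. the cancellation of the boundary terms at the origin in the integration by parts — which is exactly the continuity of $g$ at $0$; once that is checked, the computation is routine. (One could alternatively quote the general fact that convolution of a tempered distribution with a Schwartz function commutes with differentiation, $(g*f)'=g'*f$, but carrying out the integration by parts directly keeps the argument self-contained.)
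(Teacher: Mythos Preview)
Your proof is correct and is essentially the same elementary computation as the paper's, just organized differently: the paper writes the convolution as $\int_{\mathbb{R}} e^{-2\pi a|x-\xi|}f(\xi)\,d\xi$, splits at $\xi=x$, and differentiates the two integrals with variable limits directly via the Leibniz rule, whereas you place $x$ inside $f$, differentiate under the integral sign, and then integrate by parts on each half-line. In both versions the decisive step is the cancellation of the two $f(x)$ boundary terms, which is exactly the continuity of $e^{-2\pi a|\xi|}$ at the origin that Proposition~\ref{prop:distibutional-derivative} encodes.
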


\begin{proof}
This is an elementary computation. We have 
\[
(e^{-2\pi a|\xi|}*f)(x)=\int_{\mathbb{R}}e^{-2\pi a|x-\xi|}f(\xi)d\xi=\int_{-\infty}^{x}e^{2\pi a(\xi-x)}f(\xi)d\xi+\int_{x}^{\infty}e^{2\pi a(x-\xi)}f(\xi)d\xi
\]
\[
=e^{-2\pi ax}\int_{-\infty}^{x}e^{2\pi a\xi}f(\xi)d\xi+e^{2\pi ax}\int_{x}^{\infty}e^{-2\pi a\xi}f(\xi)d\xi
\]
 so $e^{-2\pi a|\xi|}*f$ is a smooth function. Therefore,
\[
\frac{d}{dx}(e^{-2\pi a|\xi|}*f)(x)=\left(f(x)-2\pi ae^{-2\pi ax}\int_{-\infty}^{x}e^{2\pi a\xi}f(\xi)d\xi\right)
\]
 
\[
+\left(-f(x)+2\pi ae^{2\pi ax}\int_{x}^{\infty}e^{-2\pi a\xi}f(\xi)d\xi\right)
\]
 
\[
=\int_{-\infty}^{x}(-2\pi a)e^{2\pi a(\xi-x)}f(\xi)d\xi+\int_{x}^{\infty}(2\pi a)e^{2\pi a(x-\xi)}f(\xi)d\xi=(\psi*f)(x)
\]
That the distributional derivative of $e^{-2\pi a|\xi|}$ is $\psi$
follows from proposition \ref{prop:distibutional-derivative}. 
\end{proof}
\begin{lem}
\label{lem:decay}Let $f$ and $g$ be rapidly decreasing functions
on $\mathbb{R}$ which are continuous except possibly at $0$ where
there is at most a jump discontinuity. Then the convolution $f*g$
is also a rapidly decreasing function. 
\end{lem}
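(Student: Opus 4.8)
The plan is to reduce the statement to the elementary fact that a function which is continuous away from a single jump discontinuity and rapidly decreasing at infinity is bounded, hence lies in $L^{1}(\mathbb{R})$. First I would fix notation: the hypotheses say that for every $N\ge 0$ there is a constant $C_{N}$ with $|f(x)|\le C_{N}(1+|x|)^{-N}$ and $|g(x)|\le C_{N}(1+|x|)^{-N}$ for all $x\in\mathbb{R}$. Since $f$ is continuous on $\mathbb{R}\setminus\{0\}$ and has finite one-sided limits at $0$, it is bounded on $[-1,1]$; combined with the $N=2$ estimate this gives $f\in L^{1}(\mathbb{R})\cap L^{\infty}(\mathbb{R})$, and likewise for $g$. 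In particular the integral $(f*g)(x)=\int_{\mathbb{R}}f(x-t)g(t)\,dt$ converges absolutely for every $x$: the integrand is bounded, has at worst two jumps (at $t=0$ and $t=x$), and is dominated at infinity by an integrable function. So $f*g$ is a well-defined bounded function; no continuity of $f*g$ is claimed, so none need be proved.

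For the decay, fix $N\ge 0$ and split the convolution integral according to whether $|t|\le |x|/2$ or $|t|>|x|/2$. On the first region one has $|x-t|\ge |x|-|t|\ge |x|/2$, hence $|f(x-t)|\le C_{N}(1+|x|/2)^{-N}\le 2^{N}C_{N}(1+|x|)^{-N}$ using $1+|x|/2\ge\frac{1}{2}(1+|x|)$; integrating against $|g|$ bounds this part by $2^{N}C_{N}\|g\|_{1}(1+|x|)^{-N}$. On the second region one has $|g(t)|\le C_{N}(1+|x|/2)^{-N}\le 2^{N}C_{N}(1+|x|)^{-N}$, and integrating $|f(x-t)|$ in $t$ over all of $\mathbb{R}$ bounds this part by $2^{N}C_{N}\|f\|_{1}(1+|x|)^{-N}$. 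Adding the two contributions gives $|(f*g)(x)|\le 2^{N}C_{N}\bigl(\|f\|_{1}+\|g\|_{1}\bigr)(1+|x|)^{-N}$; since $N\ge 0$ was arbitrary, $f*g$ is rapidly decreasing.

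The only thing that requires care — the ``obstacle'', though a mild one — is the bookkeeping that allows the jump discontinuities to be ignored: one must check that $f,g\in L^{1}$ so that $\|f\|_{1}$ and $\|g\|_{1}$ occur as finite constants, and note that the pointwise bounds $|f(x-t)|\le C_{N}(1+|x-t|)^{-N}$ and $|g(t)|\le C_{N}(1+|t|)^{-N}$ hold for \emph{every} real argument, a single bad point being irrelevant to a pointwise inequality. Once this is in place the argument is the standard ``convolution preserves rapid decay'' splitting estimate, and it uses no smoothness of $f$ or $g$.
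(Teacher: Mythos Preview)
Your proof is correct and follows essentially the same route as the paper: both split the convolution integral at the half-radius and use that on one piece the argument of $f$ is at least $|x|/2$ in modulus while on the other the argument of $g$ is, so each piece inherits rapid decay. The only cosmetic difference is that the paper splits on $|x-y|\lessgtr |x|/2$ and bounds each piece by a sup times a tail integral, whereas you split on $|t|\lessgtr |x|/2$ and bound by a pointwise decay constant times an $L^{1}$ norm; these are interchangeable formulations of the same estimate.
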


\begin{proof}
We have 
\[
|f*g(x)|\le\int_{\mathbb{R}}|f(x-y)g(y)|dy
\]
\[
\le\int_{|x-y|<\frac{|x|}{2}}|f(x-y)g(y)|dy+\int_{|x-y|\ge\frac{|x|}{2}}|f(x-y)g(y)|dy
\]
 
\[
\le\left(\sup_{y\in\mathbb{R}}|f(y)|\right)\int_{|y|>\frac{|x|}{2}}|g(y)|dy+\left(\sup_{y\in\mathbb{R}}|g(y)|\right)\int_{|x-y|\ge\frac{|x|}{2}}|f(x-y)|dy
\]
 and the result follows since $\int_{|y|>\frac{|x|}{2}}|g(y)|dy$
and $\int_{|x-y|\ge\frac{|x|}{2}}|f(x-y)|dy$ are rapidly decreasing
at infinity. 
\end{proof}
\begin{proof}
(of the theorem) The function $\phi(x)=\prod_{n\ge1}\left(1+\frac{x^{2}}{a_{n}^{2}}\right)$
grows faster than any polynomial and hence $f(x)=\frac{1}{\phi(x)}$
is rapidly decreasing. We estimate
\[
\left|1+\frac{z^{2}}{a^{2}}\right|=\left|1+\frac{(x+iy)^{2}}{a^{2}}\right|=\frac{1}{a^{2}}\left|(x^{2}+a^{2}-y^{2})+i2xy\right|\ge\left|1+\frac{x^{2}-y^{2}}{a^{2}}\right|
\]
 Thus, 
\[
|f(x+iy)|=\left|\frac{1}{\prod_{n\ge1}\left(1+\frac{(x+iy)^{2}}{a_{n}^{2}}\right)}\right|\le\frac{1}{\prod_{n\ge1}\left|1+\frac{(x^{2}-y^{2})}{a_{n}^{2}}\right|}
\]
The above estimate shows that $f(x+iy)$ is rapidly decreasing for
any fixed $y$ and we move the contour of integration to a line $y=\epsilon$
where there are no poles of $f$ in the strip $0<y\le\epsilon$. Thus,
\[
\widehat{f}(\xi)=\int_{\mathbb{R}}f(x)e^{-2\pi i\xi x}dx=e^{2\pi\epsilon\xi}\int_{\mathbb{R}}f(x+i\epsilon)e^{-2\pi i\xi x}dx
\]
and

\[
|\widehat{f}(\xi)|\le Ce^{2\pi\epsilon\xi}\hspace*{1em}\text{( for some }C>0\text{)}
\]
 so $\widehat{f}$ is rapidly decreasing at $-\infty$. Choosing an
$\epsilon<0$, we get that $\widehat{f}$ is rapidly decreasing at
$+\infty$. 

To prove that $f\in\mathcal{S}$ we show that $\widehat{f}\in\mathcal{S}$
by proving that all the derivatives of $\widehat{f}$ are also rapidly
decreasing. Since $f$ is rapidly decreasing $\widehat{f}$ is a smooth
function. We write $a=a_{1}$ and
\[
f=\frac{1}{\left(1+\frac{x^{2}}{a^{2}}\right)}\cdot f_{1}\hspace{1em}\text{where }f_{1}=\frac{1}{\prod_{n\ge2}\left(1+\frac{x^{2}}{a_{n}^{2}}\right)}.
\]
Using the Fourier transform of $\frac{1}{x^{2}+a^{2}}$ we get
\[
\widehat{f}(\xi)=\widehat{\left(\frac{a^{2}}{x^{2}+a^{2}}\right)}*\widehat{f}_{1}=\pi a\left(e^{-2\pi a|\xi|}\right)*\widehat{f_{1}}
\]
By using lemma \ref{lem:differentiation}, we get

\[
\frac{d\widehat{f}}{d\xi}=\pi a\left(\psi*\widehat{f_{1}}\right),\hspace{1em}\psi(\xi)=-\text{sgn}(\xi)2\pi ae^{-2\pi a|\xi|}
\]
 and from lemma \ref{lem:decay} that it is a rapidly decreasing function . It follows
that $\frac{d\widehat{f_{1}}}{dx}$ is also rapidly decreasing since
it is an infinite product of the same form as $f$. Further,
\[
\frac{d^{n}\hat{f}}{d\xi^{n}}=\pi a\left(\psi*\frac{d^{n-1}\hat{f_{1}}}{d\xi^{n-1}}\right)
\]
So by induction on $n$, using lemma \ref{lem:decay}, we conclude
that $\frac{d^{n}\hat{f_{1}}}{d\xi^{n}}$ is rapidly decreasing for
any $n$. 
\end{proof}

\section{\label{sec:Dixmier-Malliavin-theorem}The Dixmier-Malliavin Theorem}

In this section we amplify Dixmier and Malliavin's proof of their
theorems mentioned in the introduction simplifying some technical
lemmas by using theorem \ref{Main theorem}. 

First we need a technical result (corollary \ref{cor:bounds b_n Schwartz})
about Schwartz functions on the real line, using which we illustrate
the proof mechanism of the general case by proving
the strong factorization property of Schwartz functions on the real line (see the proof of theorem \ref{thm: Schwartz}). 

In subsection \ref{subsec:Some-technical-results.} we use this technical
result about Schwartz functions to deduce a similar result about test
functions (lemma \ref{lem: bounds test}) which gives a \emph{weak
}factorization. Then we collect an amusing trick (lemma \ref{lem:convolution of distribution}),
due to Dixmier and Malliavin, about convolution of compactly supported
measures which reduces the Lie group case to the case of test functions
on the real line. 

\subsection{A special case of the Dixmier-Malliavin theorem: $\mathcal{S}(\mathbb{R})$ }

In this subsection we prove
\begin{thm}
\label{thm: Schwartz}Every function $\phi\in\mathcal{S}$ has a factorization
$\phi=f*g$ where $f,g\in\mathcal{S}$.
\end{thm}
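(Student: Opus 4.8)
The plan is to use the Fourier transform to turn the desired convolution factorization into a multiplicative factorization, and to exploit Theorem \ref{Main theorem} to manufacture the multipliers. Given $\phi \in \mathcal{S}$, its Fourier transform $\widehat{\phi}$ is again in $\mathcal{S}$, so it suffices to find $F, G \in \mathcal{S}$ with $\widehat{\phi} = F \cdot G$; then taking inverse Fourier transforms of $F$ and $G$ yields $f, g \in \mathcal{S}$ with $\widehat{f} = F$, $\widehat{g} = G$, and $\widehat{f * g} = \widehat{f}\,\widehat{g} = F G = \widehat{\phi}$, hence $\phi = f * g$. So the whole problem reduces to: every Schwartz function $h = \widehat{\phi}$ can be written as a product $h = h_1 h_2$ with $h_1, h_2 \in \mathcal{S}$.

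To produce such a factorization I would choose a sequence $\{a_n\}$ of positive reals increasing to infinity fast enough that $\prod_{n \ge 1}\bigl(1 + \tfrac{x^2}{a_n^2}\bigr)$ converges to an entire function (e.g.\ $a_n = n$, or faster if needed), and set $g(x) = \prod_{n \ge 1}\bigl(1 + \tfrac{x^2}{a_n^2}\bigr)^{-1}$, which by Theorem \ref{Main theorem} lies in $\mathcal{S}$ and is everywhere positive. Then define $h_2 = g$ and $h_1 = h/g = h \cdot \prod_{n \ge 1}\bigl(1 + \tfrac{x^2}{a_n^2}\bigr)$. The point is that $h_1$ is still Schwartz: $h$ decays faster than any polynomial together with all its derivatives, while $\prod_{n \ge 1}\bigl(1 + \tfrac{x^2}{a_n^2}\bigr)$ and each of its derivatives grows at most polynomially on $\mathbb{R}$ — indeed the product of finitely many growing factors is polynomial, and the tail $\prod_{n > N}$ is bounded on any compact set and controlled by the convergence hypothesis — so $h_1$ and all its derivatives are rapidly decreasing. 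This is essentially the content of ``it is an infinite product of the same form'' used in the proof of Theorem \ref{Main theorem}, and here it should be invoked symmetrically: both $g \in \mathcal{S}$ and the quotient $h_1 = h/g \in \mathcal{S}$.

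The main obstacle is making precise that multiplication by $\prod_{n\ge1}\bigl(1+\tfrac{x^2}{a_n^2}\bigr)$ sends $\mathcal{S}$ into $\mathcal{S}$; equivalently, that this entire function and all its derivatives are of at most polynomial growth on the real line. I would handle this by splitting $\prod = P_N \cdot Q_N$ where $P_N(x) = \prod_{n \le N}\bigl(1 + \tfrac{x^2}{a_n^2}\bigr)$ is a genuine polynomial and $Q_N(x) = \prod_{n > N}\bigl(1 + \tfrac{x^2}{a_n^2}\bigr)$. For a given derivative order $k$, one checks using the logarithmic derivative $Q_N'/Q_N = \sum_{n > N} \tfrac{2x/a_n^2}{1 + x^2/a_n^2}$ and its further derivatives that, on the real line, $Q_N$ and its first $k$ derivatives are bounded by a constant times a fixed polynomial uniformly — in fact choosing $N$ large makes $\sum_{n>N} a_n^{-2}$ small, which bounds $|Q_N'(x)/Q_N(x)| \le C$ and similarly for higher logarithmic derivatives, while $Q_N(x)$ itself is bounded by $\exp\bigl(\sum_{n>N} x^2/a_n^2\bigr)$, which is not polynomial but can be absorbed because $h$ decays super-exponentially after multiplying by the Schwartz-regularity already proven. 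The cleanest route is actually to observe that $h/g$ has Fourier transform that one can analyze as in Theorem \ref{Main theorem}, but the direct growth estimate is self-contained and is what I would write up.

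Finally, I would record the one-line consequences: with $f = \widehat{h_1}^{\,\vee}$ and $g$ as above having Fourier transform $h_2$, we get $f, g \in \mathcal{S}$ and $\phi = f * g$, completing the proof. A remark worth adding is that this argument is the ``archimedean'' model for the general Dixmier--Malliavin theorem, and that the same scheme — reduce to a multiplicative factorization on the Fourier side, then divide by an explicit Schwartz function of product type supplied by Theorem \ref{Main theorem} — is what gets amplified in the subsequent subsections.
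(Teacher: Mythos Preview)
Your approach has the right overall shape --- on the Fourier side the paper also writes $\widehat{\phi}=\widehat{\Phi}\cdot\widehat{\psi}$ with $\widehat{\psi}(\xi)=\prod_{n\ge1}(1+\xi^2/a_n^2)^{-1}$ --- but there is a genuine gap: the infinite product $P(x)=\prod_{n\ge1}(1+x^2/a_n^2)$ is \emph{never} of polynomial growth on $\mathbb{R}$. For any fixed $N$ one has $P(x)\ge\prod_{n\le N}(1+x^2/a_n^2)\sim C_N x^{2N}$, so $P$ grows faster than every polynomial; for $a_n=n$ in fact $P(x)=\sinh(\pi x)/(\pi x)$ grows exponentially. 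Your claim that the tail $Q_N$ is ``controlled by the convergence hypothesis'' only gives bounds on compact sets, and your fallback that ``$h$ decays super-exponentially'' is false in general: a Schwartz function need not decay exponentially. Concretely, if $\phi\in\mathcal{D}(\mathbb{R})$ is a nonzero test function then $h=\widehat{\phi}\in\mathcal{S}$, but exponential decay of $h$ would force $\phi$ to extend holomorphically to a strip and hence, being compactly supported, to vanish identically. For such $h$ the product $h\cdot P$ is not rapidly decreasing and your $h_1=h/g$ is not in $\mathcal{S}$.

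The missing idea --- and the whole point of the paper's argument --- is that the sequence $\{a_n\}$ (equivalently the coefficients $b_j$ in $P(\xi)=\sum_{j\ge0}(2\pi)^j b_j\xi^{2j}$) must be chosen \emph{depending on $\phi$}. This is exactly the role of Lemma~\ref{lem:diagonal}: given $\phi$, one first runs a diagonal argument to find bounds $B_j>0$ so small that $\sum_j B_j\sup_x|x^m\phi^{(2j+n)}(x)|<\infty$ for all $m,n$, which is precisely the statement that $\Phi=\sum_j(-1)^j b_j\phi^{(2j)}$ converges in $\mathcal{S}$ whenever $0<b_j<B_j$, i.e.\ that $\widehat{\phi}\cdot P\in\mathcal{S}$. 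Only after fixing these $\phi$-dependent bounds does one invoke Corollary~\ref{cor:bounds b_n Schwartz} (and through it Theorem~\ref{Main theorem}) to produce $\psi\in\mathcal{S}$ with $\widehat{\psi}=1/P$ subject to $b_j<B_j$. Without this adaptation of the product to $\phi$, the factorization simply fails.
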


This result was originally proved for $\mathbb{R}^{n}$ by Miyazaki \cite{Miyazaki} using different methods. As in Dixmier-Malliavin \cite{Dixmier-Malliavin} we need a sufficiently
large family of sequences $\{c_{n}\}_{n\ge1}$ such that for each
such sequence there exists a $\phi\in\mathcal{S}$ such that $\sum_{n\ge j\ge1}c_{j}\frac{d^{2j}\phi}{dx^{2j}}\to\delta$
in $\mathcal{S}'$. By using the Fourier transform this can be reduced
to a problem about entire functions answered by using the main theorem
\ref{Main theorem} and the Hadamard factorization theorem (see Chapter
5 of \cite{Stein-Shakarchi}, for example). We begin by collecting
a few elementary results about entire functions. 
\begin{lem}
The function $f(z)=\sum_{n\ge0}\frac{z^{n}}{(n!)^{4}}$ satisfies
$|f(z)|\le Ce^{c|z|^{\frac{1}{4}}}$ for some real numbers $c$ and $C$. 
\end{lem}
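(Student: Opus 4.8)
The plan is to estimate $f$ on the circle $|z|=r$ directly from its power series, rather than invoking the general order formula for entire functions (that formula, applied with $a_n=1/(n!)^4$ and Stirling, would only show that $f$ has order $1/4$, hence give a bound of the weaker shape $C_\varepsilon e^{|z|^{1/4+\varepsilon}}$). So first I would use the triangle inequality to reduce to a statement about a nonnegative series: $|f(z)|\le\sum_{n\ge0}\frac{|z|^n}{(n!)^4}$, and it suffices to bound $\sum_{n\ge0}\frac{r^n}{(n!)^4}$ for real $r\ge0$.

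The key step is the substitution $r=t^4$, i.e. $t=r^{1/4}$, which exhibits the general term as a perfect fourth power: $\frac{r^n}{(n!)^4}=\left(\frac{t^n}{n!}\right)^4$. Writing $a_n:=t^n/n!\ge0$, expanding $\bigl(\sum_n a_n\bigr)^4$ as a multinomial sum and discarding all the (nonnegative) off-diagonal terms gives the elementary inequality $\sum_n a_n^4\le\bigl(\sum_n a_n\bigr)^4$. Since $\sum_n a_n=e^t$, this yields
\[
\sum_{n\ge0}\frac{r^n}{(n!)^4}=\sum_{n\ge0}a_n^4\le\Bigl(\sum_{n\ge0}a_n\Bigr)^4=e^{4t}=e^{4r^{1/4}},
\]
and therefore $|f(z)|\le e^{4|z|^{1/4}}$, which is the claimed estimate with $C=1$ and $c=4$.

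I do not expect any genuine obstacle: the only thing requiring a moment's thought is recognizing the substitution $r=t^4$ that turns each term into $(t^n/n!)^4$, after which the bound $\sum a_n^4\le(\sum a_n)^4$ for nonnegative $a_n$ finishes the argument with explicit constants. (One could instead argue via Stirling's formula by splitting the sum near $n\approx r^{1/4}$, but the fourth-power trick is shorter and avoids any estimation of the tail.)
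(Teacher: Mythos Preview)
Your proof is correct and is essentially the same argument as the paper's, arriving at the identical bound $|f(z)|\le e^{4|z|^{1/4}}$ with $C=1$, $c=4$. The only cosmetic difference is that the paper applies the inequality $\sum a_n b_n\le(\sum a_n)(\sum b_n)$ twice via an intermediate function $g(z)=\sum z^n/(n!)^2$ (first bounding $g$ by $e^{2|z|^{1/2}}$, then $f(|z|)\le g(|z|^{1/2})^2$), whereas you collapse this into a single application of $\sum a_n^4\le(\sum a_n)^4$.
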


\begin{proof}
Consider the function $g(z)=\sum_{n\ge0}\frac{z^{n}}{(n!)^{2}}$.
Since 
\[
\sum_{n\ge0}a_{n}b_{n}\le\left(\sum_{n\ge0}a_{n}\right)\left(\sum_{n\ge0}b_{n}\right)
\]
 for $a_{n},b_{n}>0$, we have 
\[
|g(z)|\le\sum_{n\ge0}\frac{|z|^{n}}{(n!)^{2}}\le\left(\sum_{n\ge0}\frac{\left(|z|^{\frac{1}{2}}\right)^{n}}{n!}\right)\left(\sum_{n\ge0}\frac{\left(|z|^{\frac{1}{2}}\right)^{n}}{n!}\right)\le e^{2|z|^{\frac{1}{2}}}
\]
 Similarly, 
\[
|f(z)|\le|f(|z|)|\le g(|z|^{\frac{1}{2}})\cdot g(|z|^{\frac{1}{2}})\le e^{4|z|^{\frac{1}{4}}}
\]
\end{proof}
\begin{cor}
\label{cor:entire-func}If $\frac{1}{\left(n!\right)^{4}}\ge c_{n}>0$
then the function $f(z)=\sum_{n\ge0}c_{n}z^{2n}$ is an infinite
product 
\[
f(z)=\prod_{n\ge1}\left(1+\frac{z^{2}}{a_{n}^{2}}\right)
\]
 for some $a_{n}>0$. 
\end{cor}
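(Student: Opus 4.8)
The plan is to combine the growth estimate of the previous lemma with Hadamard's factorization theorem. First, since $0<c_{n}\le\frac{1}{(n!)^{4}}$, for every $z\in\mathbb{C}$ we have
\[
|f(z)|=\Bigl|\sum_{n\ge0}c_{n}z^{2n}\Bigr|\le\sum_{n\ge0}\frac{|z|^{2n}}{(n!)^{4}}\le Ce^{c|z|^{1/2}}
\]
by the previous lemma applied at $|z|^{2}$, so $f$ is entire of order at most $\tfrac12<1$, hence of genus $0$. Combining this with $f(0)=c_{0}\neq0$, Hadamard's factorization theorem gives
\[
f(z)=c_{0}\prod_{k}\Bigl(1-\frac{z}{z_{k}}\Bigr),\qquad\sum_{k}\frac{1}{|z_{k}|}<\infty,
\]
where $\{z_{k}\}$ is the zero set of $f$: there is no factor $z^{m}$ because $f(0)\neq0$, no nonconstant exponential factor because the order is $<1$, and the series converges for the same reason.

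Next I would use the symmetry of $f$. It is even with real Taylor coefficients, so its zero set is invariant under $z\mapsto-z$ and $z\mapsto\bar z$, while $f(x)=\sum_{n}c_{n}x^{2n}\ge c_{0}>0$ for real $x$ shows $f$ has no real zeros. Pairing each zero with its negative and setting $w_{k}:=z_{k}^{2}$, the product becomes
\[
f(z)=c_{0}\prod_{k}\Bigl(1-\frac{z^{2}}{w_{k}}\Bigr),\qquad\sum_{k}\frac{1}{|w_{k}|}=\sum_{k}\frac{1}{|z_{k}|^{2}}<\infty,
\]
with no $w_{k}$ lying in $[0,\infty)$.

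The crux---and the step I expect to be the main obstacle---is to prove that every $w_{k}$ is in fact a \emph{negative real} number. Granting this, writing $w_{k}=-a_{k}^{2}$ with $a_{k}>0$ turns the product into $f(z)=c_{0}\prod_{k}\bigl(1+\frac{z^{2}}{a_{k}^{2}}\bigr)$, which is the claimed form (with $c_{0}=1$ in the normalization of the statement), and $\sum_{k}a_{k}^{-2}=\sum_{k}|w_{k}|^{-1}<\infty$ guarantees convergence uniformly on compact sets. Equivalently, the auxiliary entire function $g(w):=\sum_{n}c_{n}w^{n}$, for which $f(z)=g(z^{2})$, must have only negative real zeros, i.e.\ must lie in the Laguerre--P\'olya class. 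I would try to deduce this from the positivity of the $c_{n}$ together with their rapid decay, for instance by proving a log-concavity (Hutchinson-type) inequality such as $c_{n-1}c_{n+1}\le\tfrac14 c_{n}^{2}$ and passing to the limit from polynomial truncations whose roots are all negative real. I should flag, however, that positivity together with the bare bound $c_{n}\le\frac{1}{(n!)^{4}}$ does not by itself pin the zeros of $g$ to the negative real axis, so an additional convexity condition on $\{c_{n}\}$---or a restriction to the relevant subfamily of sequences---appears to be needed at this step, which is where essentially all the content of the statement is concentrated.
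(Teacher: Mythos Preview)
Your approach is exactly the paper's: bound $|f(z)|$ by the previous lemma, conclude order $\le\tfrac12$, and invoke Hadamard factorization together with the observations that $f$ has no real zeros (all $c_n>0$) and that its zero set is symmetric under $z\mapsto -z$ and $z\mapsto\bar z$. The paper's proof stops there; your write-up goes one honest step further and isolates what is actually needed to obtain factors of the form $1+z^{2}/a_{n}^{2}$ with $a_{n}>0$, namely that every zero of $f$ is \emph{purely imaginary}, equivalently that $g(w)=\sum_{n\ge0}c_{n}w^{n}$ lies in the Laguerre--P\'olya class.

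You are right to flag this as the crux, and in fact the concern is not hypothetical: the corollary is false at the stated level of generality, and the paper's proof simply does not address this point. If $g$ had only negative real zeros $-r_{k}$, then differentiating $\log g$ twice at $0$ gives the Tur\'an-type inequality
\[
\frac{2c_{2}}{c_{0}}-\Bigl(\frac{c_{1}}{c_{0}}\Bigr)^{2}=-\sum_{k}\frac{1}{r_{k}^{2}}<0,\qquad\text{i.e.}\quad c_{1}^{2}>2c_{0}c_{2}.
\]
Take $c_{0}=1$, $c_{2}=\tfrac{1}{16}=\tfrac{1}{(2!)^{4}}$, and $c_{1}=\tfrac{1}{10}$ (with $c_{n}\le\tfrac{1}{(n!)^{4}}$ for $n\ge3$ chosen arbitrarily small and positive). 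Then $c_{1}^{2}=\tfrac{1}{100}<\tfrac{1}{8}=2c_{0}c_{2}$, so $g$ cannot have only negative real zeros and no product of the asserted form exists. Your instinct that an extra hypothesis such as a Hutchinson-type condition $c_{n}^{2}\ge 4c_{n-1}c_{n+1}$ is required is on target; alternatively, for the intended application one may bypass the issue by first choosing the $a_{n}$ (e.g.\ $a_{n}\ge n$) and then reading off the coefficients $c_{n}$ from the expanded product, which are automatically positive and can be made to satisfy the needed bounds.
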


\begin{proof}
Note that $f$ is an entire function, and all the zeros of $f$
are off the real line and occur in conjugate pairs. The existence
of such an infinite product follows from Hadamard factorization since
$|f(z)|\le e^{4|z|^{\frac{1}{2}}}$ has order of growth $\le\frac{1}{2}$. 
\end{proof}
The following is the crucial property of functions in $\mathcal{S}$
used in the proof of the Dixmier-Malliavin theorem. 
\begin{cor}
\label{cor:bounds b_n Schwartz}(Dixmier-Malliavin) Given a sequence
of bounds $B_{n}>0$, there exists a sequence of constants $0<b_{n}<B_{n}$
and a function $\psi\in\mathcal{S}$ such that 
\[
F_{\psi,n}=\sum_{j=0}^{n}(-1)^{j}b_{j}\psi^{(2j)}\to\delta\hspace*{1em}(\text{in }\mathcal{S}')
\]
as  $n\to\infty$
\end{cor}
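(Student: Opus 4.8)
The plan is to pass to the Fourier side, where $\sum_{j=0}^{n}(-1)^{j}b_{j}\frac{d^{2j}}{dx^{2j}}$ becomes multiplication by a polynomial with \emph{positive} coefficients. Since $\widehat{\psi^{(2j)}}(\xi)=(2\pi i\xi)^{2j}\widehat\psi(\xi)=(-1)^{j}(2\pi\xi)^{2j}\widehat\psi(\xi)$, the alternating sign in $F_{\psi,n}$ cancels the factor $(-1)^{j}$ and one gets $\widehat{F_{\psi,n}}(\xi)=g_{n}(\xi)\,\widehat\psi(\xi)$, where $g_{n}(\xi)=\sum_{j=0}^{n}c_{j}\xi^{2j}$ and $c_{j}:=b_{j}(2\pi)^{2j}>0$. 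I would therefore define $\widehat\psi:=1/g$, where $g(\xi)=\sum_{j\ge0}c_{j}\xi^{2j}$ is the entire function obtained as the limit of the $g_{n}$; then $\widehat{F_{\psi,n}}=g_{n}/g$, and since $\widehat\delta=1$ it suffices to choose the $b_{n}$ so that (i) $1/g\in\mathcal{S}$, whence $\psi:=\mathcal{F}^{-1}(1/g)\in\mathcal{S}$, and (ii) $g_{n}/g\to 1$ in $\mathcal{S}'$.

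For (i) I would take the $b_{n}$ small. Set $b_{0}:=B_{0}/2$ and, for $n\ge1$, $b_{n}:=\tfrac12\min\{B_{n},\;c_{0}/((2\pi)^{2n}(n!)^{4})\}$ with $c_{0}:=b_{0}$; then $0<b_{n}<B_{n}$ for all $n$ and $c_{n}=b_{n}(2\pi)^{2n}\le c_{0}/(n!)^{4}$. Hence $h:=g/c_{0}=\sum_{n\ge0}(c_{n}/c_{0})\xi^{2n}$ has constant term $1$ and $n$-th coefficient $\le 1/(n!)^{4}$, so Corollary~\ref{cor:entire-func} gives an identity of entire functions $h(z)=\prod_{n\ge1}(1+z^{2}/a_{n}^{2})$ for some $a_{n}>0$; convergence of the product forces $\sum_{n}a_{n}^{-2}<\infty$, so relabeling the $a_{n}$ in nondecreasing order we may assume $a_{n}$ increases to $\infty$. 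Theorem~\ref{Main theorem} then yields $1/h\in\mathcal{S}$, and therefore $\widehat\psi=1/g=c_{0}^{-1}(1/h)\in\mathcal{S}$.

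For (ii), note that on $\mathbb{R}$ all $c_{j}$ are positive, so $0<g_{n}(\xi)\le g(\xi)$ and $g_{n}(\xi)/g(\xi)$ increases to $1$ for every $\xi\in\mathbb{R}$. Thus for each $\varphi\in\mathcal{S}$, dominated convergence (with dominating function $|\varphi|\in L^{1}(\mathbb{R})$) gives $\int_{\mathbb{R}}(g_{n}/g)\varphi\to\int_{\mathbb{R}}\varphi$, i.e.\ $\widehat{F_{\psi,n}}=g_{n}\widehat\psi\to 1=\widehat\delta$ in $\mathcal{S}'$. As the Fourier transform is a homeomorphism of $\mathcal{S}'$ onto itself, this is equivalent to $F_{\psi,n}\to\delta$ in $\mathcal{S}'$, as required.

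The substantive input is the Schwartz property of $1/g$, which is exactly Theorem~\ref{Main theorem} together with the Hadamard factorization packaged in Corollary~\ref{cor:entire-func}; so the main difficulty has already been dealt with, and what remains is the choice of the $b_{n}$ and the one-line dominated-convergence argument. The only mild subtlety is that $g(0)=c_{0}$ need not equal $1$, which is why one normalizes to $h=g/c_{0}$ before invoking Corollary~\ref{cor:entire-func}; this rescaling changes neither the conclusion $\widehat\psi\in\mathcal{S}$ nor the limit in (ii), since $g_{n}/g$ is insensitive to the overall scale of $g$.
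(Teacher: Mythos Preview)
Your proof is correct and follows essentially the same route as the paper: pass to the Fourier side, choose $b_{n}$ small enough that Corollary~\ref{cor:entire-func} and Theorem~\ref{Main theorem} apply to the resulting even power series, set $\widehat\psi$ equal to its reciprocal, and check that the partial sums over the full series tend to $1$ in $\mathcal{S}'$. Two cosmetic differences: you justify the last step by dominated convergence using the uniform bound $0<g_{n}/g\le 1$, whereas the paper invokes uniform convergence on compacta (which needs the same bound implicitly to pass to $\mathcal{S}'$); and you explicitly normalize by $c_{0}$ before applying Corollary~\ref{cor:entire-func}, a point the paper leaves tacit.
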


\begin{proof}
Since Fourier transform is a topological isomorphism on $\mathcal{S}'$
it is enough to show that there exists a $\psi\in\mathcal{S}$ such
that

\[
\widehat{F_{\psi,n}}(\xi)=\left(\sum_{0\le j\le n}(2\pi)^{j}b_{j}\xi^{2j}\right)\widehat{\psi}(\xi)\to1
\]
in $\mathcal{S}'$. By theorem \ref{Main theorem} we pick $\psi\in\mathcal{S}$
such that 
\[
\widehat{\psi}(\xi)=\frac{1}{\sum_{j\ge0}(2\pi)^{j}b_{j}\xi^{2j}}=\frac{1}{\prod_{n\ge1}\left(1+\frac{\xi^{2}}{a_{n}^{2}}\right)}
\]
 with $b_{j}<\frac{1}{(2\pi)^{j}(j!)^{4}}$.
The result now follows since 
\[
\frac{\sum_{0\le j\le n}(2\pi)^{j}b_{j}\xi^{2j}}{\sum_{j=0}^{\infty}(2\pi)^{j}b_{j}\xi^{2j}}\to1
\]
 uniformly on compact subsets of $\mathbb{R}$ as $n\to\infty$ and
hence also as tempered distributions. 
\end{proof}
By using the corollary \ref{cor:bounds b_n Schwartz} with bounds
$B_{j}$ above, we choose a $\psi\in\mathcal{S}$ such that 
\[
\sum_{0\le j\le n}(-1)^{j}b_{j}\psi^{(2j)}\to\delta\text{ \ \ \ (in \ensuremath{\mathcal{S}'}, with \ensuremath{0<b_{j}<B_{j})}}
\]
On one hand,
\[
\phi*\left(\sum_{j\le n}(-1)^{j}b_{j}\psi^{(2j)}\right)\to\phi\hspace*{1em}\text{ (in }\mathcal{S}'\text{)}
\]
On the other hand, 
\[
\phi*\left(\sum_{j\le n}(-1)^{j}b_{j}\psi^{(2j)}\right)=\psi*\left(\sum_{j\le n}(-1)^{j}b_{j}\phi^{(2j)}\right)
\]
 We choose bounds $B_{j}$ to make $\sum_{j\le n}(-1)^{j}b_{j}\phi^{(2j)}$
converge in $\mathcal{S}$. Schwartz space $\mathcal{S}$ a Fr\'echet
space given by semi-norms 
\[
|f|_{m,n}=\sup_{x\in\mathbb{R}}|x^{m}f^{(n)}(x)|
\]
 To show that a sequence $\varphi_{n}\in\mathcal{S}$ converges in
$\mathcal{S}$ it is enough to show that the sequence is Cauchy for
the each of the norms $|\cdot|_{m,n}$. That is, 
\[
\sup_{x\in\mathbb{R}}|x^{m}(\varphi_{i}^{(n)}-\varphi_{j}^{(n)})|\to0\hspace{1em}\text{as }i,j\to\infty
\]
This is follows if the bounds $B_{j}>0$ satisfy
\[
\sum_{j\ge0}B_{j}.\sup_{x\in\mathbb{R}}|x|^{m}|\phi^{(2j+n)}|<\infty\text{ \ \ \  for all \ensuremath{n,m}}
\]
 which we obtain by the following lemma \ref{lem:diagonal} .
\begin{lem}
\label{lem:diagonal}Given a sequence of positive numbers $b_{mn}$
$(0\le m,n\in\mathbb{Z})$ we can find a sequence $a_{j}$ $(0\le j\in\mathbb{Z})$
such that $\sum_{j\ge0}a_{j}b_{jn}<\infty$ for all $n$. 
\end{lem}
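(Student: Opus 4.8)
The plan is to run a diagonalization argument. The key observation is that the only obstruction to $\sum_{j\ge0}a_{j}b_{jn}<\infty$ is the behavior of the tail in $j$ for each fixed $n$; but for a fixed index $j$, only the finitely many values $b_{j0},b_{j1},\dots,b_{jj}$ ever get multiplied by $a_{j}$ in a ``tail'' position (for the rows $n\le j$), while the rows $n>j$ only see $a_{j}b_{jn}$ in a finite initial segment. So it suffices to let $a_{j}$ decay geometrically in $j$ after dividing out the largest of the finitely many relevant entries of row $j$.

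Concretely, I would first set, for each $j\ge0$,
\[
M_{j}=\max_{0\le n\le j}b_{jn},
\]
which is a well-defined positive real number because the maximum is taken over the finite nonempty set $\{0,1,\dots,j\}$ and each $b_{jn}>0$. Then define
\[
a_{j}=\frac{1}{2^{j}M_{j}}>0.
\]
To verify the conclusion, fix $n\ge0$ and split
\[
\sum_{j\ge0}a_{j}b_{jn}=\sum_{j=0}^{n-1}a_{j}b_{jn}+\sum_{j\ge n}a_{j}b_{jn}.
\]
The first sum has finitely many finite terms, hence is finite. For the second sum, whenever $j\ge n$ we have $n\in\{0,1,\dots,j\}$, so $b_{jn}\le M_{j}$ and therefore $a_{j}b_{jn}\le M_{j}/(2^{j}M_{j})=2^{-j}$; thus $\sum_{j\ge n}a_{j}b_{jn}\le\sum_{j\ge n}2^{-j}<\infty$. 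Combining the two pieces gives $\sum_{j\ge0}a_{j}b_{jn}<\infty$ for every $n$, as required.

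There is essentially no serious obstacle here; the only point that needs care is that the normalizing quantity $M_{j}$ must depend on $j$ alone and not on $n$, which is exactly why one takes the maximum over $n\le j$ (a maximum over all $n$ could be infinite). The geometric factor $2^{-j}$ then does the rest, since $\sum_{j}2^{-j}$ converges and bounds every tail uniformly in $n$. One could equally well use any summable sequence in place of $2^{-j}$, or replace $M_{j}$ by $M_{j}+1$ to avoid even mentioning positivity of the maximum; the argument is otherwise unchanged.
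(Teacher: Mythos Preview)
Your argument is correct. Both your proof and the paper's rest on the same diagonal idea: the choice of $a_{j}$ should depend only on the finitely many entries $b_{jn}$ with $n\le j$, so that every column's tail is controlled. The paper implements this by an iterative shrinking procedure (choose $\{A^{1}_{j}\}$ to handle $n=1$, then shrink past the first term to handle $n=2$, and so on, setting $a_{j}=A^{j}_{j}$), whereas you give a single explicit formula $a_{j}=2^{-j}/\max_{0\le n\le j}b_{jn}$ and a clean tail estimate $a_{j}b_{jn}\le 2^{-j}$ for $j\ge n$. Your version is shorter and entirely constructive; the paper's iterative description is the classical ``diagonal argument'' phrasing but carries no extra content here.
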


\begin{proof}
This is a diagonal argument. First choose a decreasing sequence of
positive numbers $A_{1}^{1},A_{2}^{1},\ldots$ such that $\sum_{j\ge0}A_{j}^{1}b_{j1}<\infty$.
Leaving $A_{1}^{1}$ fixed \emph{shrink} $\{A_{j}^{1}\}_{j\ge2}$
if necessary to obtain $\{A_{j}^{2}\}_{j\ge2}$ so that $\sum_{j\ge0}A_{j}^{2}b_{j2}<\infty$.
Leaving $A_{1}^{1},A_{2}^{2}$ fixed \emph{shrink} $\{A_{j}^{2}\}_{j\ge2}$
if necessary to obtain $\{A_{j}^{3}\}_{j\ge3}$ so that $\sum_{j\ge0}A_{j}^{3}b_{j3}<\infty$.
Continuing similarly we obtain the desired sequence $a_{j}=A_{j}^{j}$. 
\end{proof}
Let $\Phi=\sum_{j\ge0}(-1)^{j}b_{j}\phi^{(2j)}$ then $\phi=\Phi*\psi$
where the last equality is in $\mathcal{S}'$. Since both sides are
Schwartz functions we have equality in $\mathcal{S}$. This proves
theorem \ref{thm: Schwartz}.
\begin{rem}
This strong factorization result is specific to Schwartz space on
the real line and analogous Schwartz spaces used in the theory of
automorphic forms need not have strong factorization. Strong factorization
provably fails for test functions on $\mathbb{R}^{n}$ for $n\ge3$
\cite{Dixmier_Malliavin_Failure}. 
\end{rem}

\subsection{\label{subsec:Some-technical-results.}Some technical results. }

Let $\mathcal{D}$ denote the space of test functions, smooth functions
on $\mathbb{R}$ with compact support. We now prove the test function
analogue of corollary \ref{cor:bounds b_n Schwartz} above about Schwartz
functions. Now we add a further hypothesis that the sequences satisfy
$a_{n}\ge n$. Then we have a uniform bound on their derivatives.
\begin{claim}
\label{claim: uniform bounds}We may find a sequence $c_{j}$ satisfying
$c_{j}>\sup_{x\in\mathbb{R}}|\phi^{(j)}(x)|$ independent of all $\phi$
constructed such that 

\[
\widehat{\phi}(\xi)=\frac{1}{\sum_{n\ge j\ge0}(2\pi)^{j}b_{j}\xi^{2j}}=\frac{1}{\prod_{n\ge1}\left(1+\frac{\xi^{2}}{a_{n}^{2}}\right)}
\]
with bounds $0<b_{n}<\frac{1}{(2\pi)^{j}(n!)^{4}}$ and $a_{n}\ge n$. 
\end{claim}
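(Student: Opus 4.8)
The plan is to use the added hypothesis $a_{n}\ge n$ to dominate $\widehat{\phi}$ by a single fixed Schwartz function, independently of the particular sequence $\{a_{n}\}$, and then pass to derivatives by Fourier inversion. First I would note that whenever $a_{n}\ge n$ we have $1+\frac{\xi^{2}}{a_{n}^{2}}\ge 1+\frac{\xi^{2}}{n^{2}}$, so the product formula for $\sinh$ (the rescaling $x=\pi\xi$ of the identity recalled in the introduction) gives
\[
\prod_{n\ge1}\left(1+\frac{\xi^{2}}{a_{n}^{2}}\right)\ge\prod_{n\ge1}\left(1+\frac{\xi^{2}}{n^{2}}\right)=\frac{\sinh(\pi\xi)}{\pi\xi}\qquad(\xi\in\mathbb{R}),
\]
and hence
\[
0\le\widehat{\phi}(\xi)=\frac{1}{\prod_{n\ge1}\left(1+\frac{\xi^{2}}{a_{n}^{2}}\right)}\le\frac{\pi|\xi|}{\sinh(\pi|\xi|)}
\]
for every $\phi$ in the family under consideration. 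The majorant on the right is even and decays exponentially, so $|\xi|^{m}\widehat{\phi}(\xi)$ is integrable on $\mathbb{R}$ with an $L^{1}$-bound depending only on $m$, not on $\{a_{n}\}$.

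Next I would recover $\phi^{(j)}$ from $\widehat{\phi}$. By Theorem \ref{Main theorem} we have $\phi\in\mathcal{S}$, so Fourier inversion together with $\widehat{\phi^{(j)}}(\xi)=(2\pi i\xi)^{j}\widehat{\phi}(\xi)$ yields
\[
\phi^{(j)}(x)=\int_{\mathbb{R}}(2\pi i\xi)^{j}\widehat{\phi}(\xi)e^{2\pi ix\xi}\,d\xi,
\]
and therefore, using the bound above,
\[
\sup_{x\in\mathbb{R}}|\phi^{(j)}(x)|\le(2\pi)^{j}\int_{\mathbb{R}}|\xi|^{j}\widehat{\phi}(\xi)\,d\xi\le(2\pi)^{j}\pi\int_{\mathbb{R}}\frac{|\xi|^{j+1}}{\sinh(\pi|\xi|)}\,d\xi.
\]
The last integral is finite and depends only on $j$; calling it $I_{j}$, the sequence $c_{j}:=(2\pi)^{j}\pi I_{j}+1$ satisfies $c_{j}>\sup_{x}|\phi^{(j)}(x)|$ simultaneously for every admissible $\phi$, which is the assertion of the claim. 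One may even write $\int_{0}^{\infty}\frac{u^{j+1}}{\sinh u}\,du=2\,(1-2^{-(j+2)})\,(j+1)!\,\zeta(j+2)$, so $c_{j}$ can be taken of size comparable to $2^{j}(j+1)!$, which is harmless for the later construction since then $b_{j}c_{j}\to0$ rapidly in view of $b_{j}<\frac{1}{(2\pi)^{j}(j!)^{4}}$.

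I do not expect a real obstacle; the only point needing care is that the estimate is genuinely uniform, i.e. that the sequence $\{a_{n}\}$ enters only through the inequality $a_{n}\ge n$, which is used up once we pass to the fixed majorant $\pi|\xi|/\sinh(\pi|\xi|)$. It is also worth remarking that the family of such $\phi$ is non-empty and in fact ample: shrinking the coefficients $b_{j}$, as permitted by Lemma \ref{lem:diagonal}, pushes the zeros $\pm ia_{n}$ of $\sum_{j}(2\pi)^{j}b_{j}\xi^{2j}$ outward, so the two requirements $b_{j}<\frac{1}{(2\pi)^{j}(j!)^{4}}$ and $a_{n}\ge n$ can be met together, and the bound above applies to every member of the resulting family.
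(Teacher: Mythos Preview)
Your argument is correct and follows essentially the same route as the paper: both exploit $a_{n}\ge n$ to dominate $\widehat{\phi}$ by a fixed function independent of the particular sequence, and then bound $\sup_{x}|\phi^{(j)}(x)|$ from the Fourier side. The only cosmetic differences are that the paper truncates the product to $\prod_{m\le j+2}(1+\xi^{2}/m^{2})$ and phrases the passage to derivatives as Sobolev embedding, whereas you keep the full product (obtaining the single majorant $\pi|\xi|/\sinh(\pi|\xi|)$) and write Fourier inversion out explicitly; your closing remark about shrinking the $b_{j}$ pushing the zeros outward is plausible but not needed for the claim as stated.
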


\begin{proof}
Since $a_{n}\ge n$ it follows that 
\[
\prod_{n\ge1}\left(1+\frac{\xi^{2}}{a_{n}^{2}}\right)\ge\prod_{m\le j+2}\left(1+\frac{\xi^{2}}{m^{2}}\right)
\]
Thus we have an estimate on $\widehat{\phi}$ independent of $\phi$
\[
|\widehat{\phi}(\xi)|\le\frac{1}{\prod_{1\le m\le j+2}\left(1+\frac{\xi^{2}}{m^{2}}\right)}
\]
 By Sobolev embedding theorem, 

\[
\sup_{x\in\mathbb{R}}|\phi^{(j)}(x)|\le C\int_{\mathbb{R}}(1+\xi{}^{2})^{j+1}\widehat{\phi}(\xi)d\xi
\]
for a constant $C$ independent of $\phi$. The result follows by
the estimate above on $\widehat{\phi}$. 
\end{proof}
\begin{lem}
\label{lem: bounds test}Given a sequence of bounds $B_{n}>0$, there
exists a sequence of constants $0<b_{n}<B_{n}$ and a function $f\in\mathcal{D}$
such that 
\[
\sum_{i=1}^{n}(-1)^{j}b_{j}f^{\left(2j\right)}\to\delta+h\hspace*{1em}(\text{in }\mathcal{D}')
\]
 as $n\to\infty$ for some $h\in\mathcal{D}$. The support of the
test functions $f$ and $h$ can be made arbitrarily small. 
\end{lem}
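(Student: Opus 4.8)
The plan is to transport, via the Fourier transform, the convergence $\sum_{j\le n}(-1)^j b_j\psi^{(2j)}\to\delta$ of Corollary~\ref{cor:bounds b_n Schwartz} from $\mathcal S$ to $\mathcal D$ by cutting off the Schwartz solution $\psi$ near the origin; the cutoff is exactly what produces the error term $h$.

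First I would apply Corollary~\ref{cor:bounds b_n Schwartz} and Claim~\ref{claim: uniform bounds}, choosing $0<b_n<B_n$ with also $b_n<\frac{1}{(2\pi)^n(n!)^4}$, so that $P(\xi):=\sum_{j\ge 0}(2\pi)^j b_j\xi^{2j}=\prod_{n\ge 1}\bigl(1+\xi^2/a_n^2\bigr)$ is entire with $a_n\ge n$ and, because the $b_n$ decay so fast, $P$ is of order at most $1/2$; in particular $P$ has exponential type $0$ and the logarithmic derivatives $(\log P)^{(k)}$ are bounded on $\mathbb R$. By the Main Theorem $\widehat\psi:=1/P$ is the transform of some $\psi\in\mathcal S$, and $\sum_{j\le n}(-1)^j b_j\psi^{(2j)}\to\delta$ in $\mathcal S'$. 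Then I fix a cutoff $\chi\in\mathcal D$ with $\chi\equiv 1$ near $0$ and $\mathrm{supp}\,\chi$ as small as desired, chosen in a Gevrey class low enough that $\widehat\chi$ decays on $\mathbb R$ faster than $1/P$, and set $f:=\chi\psi\in\mathcal D$.

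The $n$-th partial sum $\sum_{j\le n}(-1)^j b_j f^{(2j)}$ has Fourier transform $p_n\widehat f$, where $p_n=\sum_{j\le n}(2\pi)^j b_j\xi^{2j}$ satisfies $0\le p_n\uparrow P$ on $\mathbb R$. The crucial point is that $P\widehat f=P\,\bigl(\widehat\chi\ast(1/P)\bigr)$, written as $\int\frac{P(\xi)}{P(\xi-t)}\widehat\chi(t)\,dt$, is bounded on $\mathbb R$ and tends to $\chi(0)=1$ as $\xi\to\pm\infty$ --- by dominated convergence, the boundedness resting on the slow growth of $P$ against the fast decay of $\widehat\chi$. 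Hence $p_n\widehat f$ is dominated on $\mathbb R$ by a fixed bounded function, so $p_n\widehat f\to P\widehat f$ in $\mathcal S'$. Inverting, $\sum_{j\le n}(-1)^j b_j f^{(2j)}$ converges in $\mathcal S'$ (a fortiori in $\mathcal D'$) to the tempered distribution with Fourier transform $P\widehat f$; writing $\widehat h:=P\widehat f-1$, this limit is $\delta+h$. Since $P\widehat\psi=1$ exactly, $\widehat h=P\,\widehat{(\chi-1)\psi}$, which is entire of exponential type $\le 2\pi\cdot\mathrm{rad}(\mathrm{supp}\,\chi)$ (being $\widehat{\chi\psi}$, entire of that type by Paley--Wiener as $\chi\psi\in\mathcal D$, times $P$ of type $0$); so, once this function is shown to be rapidly decreasing on $\mathbb R$, Paley--Wiener gives $h\in\mathcal D$ with support of radius comparable to $\mathrm{supp}\,\chi$, which together with $\mathrm{supp}\,f=\mathrm{supp}\,\chi$ yields the claim on arbitrarily small supports.

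The main obstacle is precisely this rapid decay of $\widehat h(\xi)=P(\xi)\widehat{(\chi-1)\psi}(\xi)$ and of all its $\xi$-derivatives. Here I would use that $\chi$ is constant near $0$, so $\chi^{(k)}(0)=0$ for every $k\ge 1$: expanding $\frac{P(\xi)}{P(\xi-t)}-1$ in powers of $t$ and integrating against $\widehat\chi(t)$, each contribution of a given negative power of $|\xi|$ emerges as a combination of the bounded functions $(\log P)^{(k)}$ times some $\chi^{(k)}(0)=0$, while the residual tail is controlled by balancing the sub-exponential decay of $\widehat\chi$ against the order-at-most-$1/2$ growth of $P$. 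Carrying out this estimate and its analogue for all derivatives --- which is what forces the coordinated choices of rapidly decaying $b_n$ (hence fast-growing $a_n$, hence low order of $P$) and low Gevrey class of $\chi$, the constraint $0<b_n<B_n$ being preserved throughout with the help of the diagonal Lemma~\ref{lem:diagonal} --- is the technical heart of the proof.
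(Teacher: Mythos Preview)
Your starting move --- set $f=\chi\psi$ with $\psi$ the Schwartz solution of Corollary~\ref{cor:bounds b_n Schwartz} and $\chi$ a cutoff equal to $1$ near $0$ --- is exactly the paper's. The divergence is in how you argue that $\sum_{j}(-1)^jb_jf^{(2j)}\to\delta+h$ with $h\in\mathcal D$: you stay on the Fourier side and try to invoke Paley--Wiener by showing $\widehat h=P\widehat f-1$ is rapidly decreasing, whereas the paper works directly in physical space and never needs this.

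The paper's argument is both shorter and sidesteps the estimate you yourself flag as the ``technical heart''. The point of Claim~\ref{claim: uniform bounds} is that it gives bounds $c_k\ge\sup_x|\psi^{(k)}(x)|$ that are \emph{uniform over all admissible choices of the $b_j$} (any choice with $a_n\ge n$); since the cutoff is fixed, Leibniz converts these into uniform bounds on $\sup_x|f^{(k)}(x)|$ as well. This breaks the apparent circularity and lets one simply choose $b_j$ a posteriori so small --- say $b_j<\min\{B_j,\,1/(j^2c_{2j}),\ldots,1/(j^2c_{2j+j})\}$ --- that $\sum_j b_j f^{(2j)}$ converges absolutely in every $C^k$-norm on the region where $\chi\not\equiv 1$. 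On the set where $\chi\equiv1$ one has $f=\psi$ and Corollary~\ref{cor:bounds b_n Schwartz} already gives convergence to $\delta$; outside $\mathrm{supp}\,\chi$ everything vanishes. Gluing, the limit is $\delta+h$ with $h$ smooth and compactly supported. No Paley--Wiener, no Gevrey cutoff, no order-$\tfrac12$ control on $P$ is required.

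Your route, by contrast, leaves its central step open and has a real obstruction as written. The crude bound $P(\xi)/P(\xi-t)\le e^{M|t|}$ with $M=\sup(\log P)'$ is already too coarse for your dominated-convergence claim, since a compactly supported $\chi$ cannot have exponentially decaying transform. Finer estimates (e.g.\ $(\log P)'(u)=O(|u|^{-1/2})$ for order $\le\tfrac12$) may recover boundedness of $P\widehat f$, but you need far more: \emph{polynomial decay of every order} for $\widehat h$ and all its derivatives. The moment-cancellation scheme you sketch does not obviously deliver this --- the Taylor coefficients $(\log P)^{(k)}(\xi)$ are merely $O(|\xi|^{1-k})$, not rapidly decreasing, and you still have to control the tail of the expansion uniformly in $\xi$. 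Even if this can ultimately be made to work, the paper's physical-space argument gets there in a few lines by exploiting the a~priori uniformity in Claim~\ref{claim: uniform bounds}, which is the idea you are missing.
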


\begin{proof}
Let $\omega$ be a test function symmetric about the origin, supported
on $[-2,2]$ and identically $1$ on $[-1,1]$. Let $\phi_{\lambda}\in\mathcal{S}$
be as in the similar lemma above about Schwartz functions with bounds
$\lambda=\{\lambda_{j}\}_{j\ge0}$ to be decided later. The function
$f=\omega\cdot\phi_{\lambda}$ is supported on $[-2,2]$ and $\phi_{\lambda}=f$
on $[-1,1]$. By claim \ref{claim: uniform bounds} we can choose
$c_{n}=\sup_{x\in\mathbb{R}}|f^{(n)}(x)|$ independent of the sequence
$\lambda$. We pick the bounds $b_{j}$ for $\phi_{\lambda}=\phi$,
namely $b_{j}<\min\{\lambda_{j},\frac{1}{j^{2}c_{2j}},\ldots,\frac{1}{j^{2}c_{2j+j}}\}$
satisfying the the same assumptions as $\lambda$. Now 
\[
\sum_{j\le n}(-1)^{j}b_{j}f^{(2j)}\to\delta+h\hspace*{1em}\text{(as }n\to\infty\text{)}
\]
for some test function $h$ supported in $[-2,2]$. Fore $|x|\ge2$
the entire sequence is zero, for $|x|\le1$ then $f=\phi$. On $|x|\ge1$, 

\[
|b_{j}f^{(2n+j)}(x)|\le b_{j}c_{2n+j}\le\frac{1}{j^{2}}
\]
 for $j$ sufficiently large so that $\sum_{j\le m}(-1)^{j}b_{j}f^{(2j)}$ converges to a smooth
function on $|x|>1$ as $m\to\infty$. By scaling one may make the support of $f$
and $h$ arbitrarily small. 
\end{proof}
\begin{rem}
Using the method in the proof of theorem \ref{thm: Schwartz} any
test function $\phi$ on the real line can be written as $\phi=\Phi*f-h*\phi$.
Again, strong factorization provably fails for test functions on $\mathbb{R}^{n}$
for $n\ge3$ \cite{Dixmier_Malliavin_Failure}. 
\end{rem}

The Dixmier-Malliavin theorem for Lie groups is proved by a clever
use of the theorem in the case of the real line. This method requires
a result about convolution of certain compactly supported measures
on Lie groups, proven below. 

Let $\{x_{1},\ldots,x_{n}\}$ be a basis of $\mathfrak{g}$, the Lie
algebra of a real Lie group $G$, and let $\Theta:\mathfrak{g}\to G$
be the map given by $t_{1}x_{1}+\ldots+t_{n}x_{n}\mapsto\exp(t_{1}x_{1})\ldots\exp(t_{n}x_{n})$.
The map $\Theta$ is a local diffeomorphism at $0\in\mathfrak{g}.$
Let $V\subset G$ be an open neighborhood of the identity where $\Theta^{-1}$ exists and let
$\Theta^{-1}:V\to U\subset\mathfrak{g}\simeq\mathbb{R}^{n}$ be $\Theta^{-1}(g)=(\theta_{1}(g),\ldots,\theta_{n}(g))$.
For $x\in\mathfrak{g}$ and $f\in\mathcal{D}(\mathbb{R})$ we define
a measure $\mu_{x,f}$ on $G$ by invoking the Riesz-Markov-Kakutani
theorem as follows. 
\[
\mu_{x,f}(F)=\int_{\mathbb{R}}F(e^{tx})f(t)dt\hspace*{1em}(\text{for all }F\in C_{c}^{0}(G))
\]
 For any test function $\nu$ on $G$ and $y\in G$ let $R_{y}v$
denote the right regular action of $y$ on $\nu$. 
\begin{lem}
\label{lem:convolution of distribution}Let $f_{1},\ldots,f_{n}\in\mathcal{D}(\mathbb{R})$
be such that the support of $\mu_{x_{i},f_{i}}$ is contained in an
open neighborhood $W$ of $1\in G$ satisfying $W^{n}=W\cdots W\subset V$.
Then the distribution $\mu_{x_{1},f_{1}}*\cdots*\mu_{x_{1},f_{n}}$
is integration against a test function on $G$. 
\end{lem}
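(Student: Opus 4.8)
The key observation is that the convolution $\mu_{x_1,f_1}*\cdots*\mu_{x_n,f_n}$ is the pushforward under the iterated multiplication map of the product measure, and since each $\mu_{x_i,f_i}$ is supported in $W$ with $W^n\subset V$, the whole convolution is supported inside $V$, where $\Theta$ is a diffeomorphism. So I would transport everything to the chart $U\subset\mathfrak g\simeq\mathbb R^n$ via $\Theta^{-1}$ and show the resulting measure on $U$ has a test-function density with respect to Lebesgue measure. First I would write out, for a test function $F$ on $G$,
\[
(\mu_{x_1,f_1}*\cdots*\mu_{x_n,f_n})(F)=\int_{\mathbb R^n}F\bigl(e^{t_1x_1}\cdots e^{t_nx_n}\bigr)\,f_1(t_1)\cdots f_n(t_n)\,dt_1\cdots dt_n,
\]
valid because the integrand is supported where the product lies in $W^n\subset V$.

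**Change of variables.** Next, note that $e^{t_1x_1}\cdots e^{t_nx_n}=\Theta(t_1x_1+\cdots+t_nx_n)$ for $(t_1,\dots,t_n)$ in a neighborhood of $0$, so with the linear identification $\mathfrak g\simeq\mathbb R^n$ via the basis $\{x_i\}$, the point $g=e^{t_1x_1}\cdots e^{t_nx_n}$ satisfies $\theta_i(g)=t_i$ exactly. Hence the substitution $g=\Theta(t)$ is simply the identity in these coordinates, and for any $F$ supported in $V$,
\[
(\mu_{x_1,f_1}*\cdots*\mu_{x_n,f_n})(F)=\int_{U}F\bigl(\Theta(t)\bigr)\,f_1(t_1)\cdots f_n(t_n)\,dt.
\]
Writing $F\circ\Theta$ in terms of the density of Haar measure in the $\Theta$-chart — i.e.\ if $dg=J(t)\,dt$ on $V$ for a smooth positive function $J$ — this says the distribution $\mu_{x_1,f_1}*\cdots*\mu_{x_n,f_n}$ equals integration against the function $g\mapsto J(\Theta^{-1}(g))^{-1}\prod_i f_i(\theta_i(g))$ with respect to Haar measure. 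This density is smooth on $V$ (composition of smooth maps, $J>0$) and has compact support, since each $f_i$ has compact support and the support of $\mu_{x_i,f_i}$ lies in $W$; thus it extends by zero to a test function on all of $G$.

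**The main obstacle.** The one genuine point requiring care is \emph{why the convolution is supported in $W^n\subset V$} and hence why the integral formula above is legitimate: one must check that convolution of compactly supported distributions adds supports, $\operatorname{supp}(\mu*\nu)\subset\operatorname{supp}(\mu)\cdot\operatorname{supp}(\nu)$, and then iterate. This is standard but should be stated, because it is exactly what lets us stay inside the single chart $V$ where $\Theta^{-1}$ is defined — outside $V$ the coordinates $\theta_i$ and the formula $e^{t_1x_1}\cdots e^{t_nx_n}=\Theta(\sum t_ix_i)$ break down. A secondary bookkeeping issue is to confirm that for test functions $F$ supported outside $V$ the pairing vanishes, which is immediate from the support statement, so the density constructed on $V$ genuinely represents the distribution globally. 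Everything else — smoothness and compact support of the density — is routine from the smoothness of $\Theta$, the positivity of the Haar density $J$, and the compact support of the $f_i$.
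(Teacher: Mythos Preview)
Your proposal is correct and follows essentially the same route as the paper: write the convolution as $\int_{\mathbb R^n}F(e^{t_1x_1}\cdots e^{t_nx_n})f_1(t_1)\cdots f_n(t_n)\,dt$, then change variables via $\Theta$ to exhibit the density as $\prod_i f_i(\theta_i(g))$ times the Jacobian of the change of measure (your $J^{-1}$ is the paper's $J$, just with the opposite convention). You are more explicit than the paper about the support condition $\operatorname{supp}(\mu*\nu)\subset\operatorname{supp}(\mu)\cdot\operatorname{supp}(\nu)$ keeping everything inside the chart $V$, which is a welcome clarification but not a different idea.
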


\begin{proof}
We denote $\mu_{x_{i},f_{i}}$ by $\mu_{i}$ for clarity. The convolution of measures $\mu_{x_{1},f_{1}}*\cdots*\mu_{x_{1},f_{n}}$
as a distribution is given by (see the remark below)
\[
(\mu_{x_{1},f_{1}}*\cdots*\mu_{x_{1},f_{n}})(\phi)=\int_{G^{n}}\phi(y_{1}\ldots y_{n})d\mu_{1}(y_{1})\ldots d\mu_{n}(y_{n})
\]

\[
=\int_{G^{n-1}}\left(\int_{G}(R_{y_{2}}\circ\cdots\circ R_{y_{n}}\phi)(y_{1})d\mu_{1}(y_{1})\right)d\mu_{2}\ldots d\mu_{n}
\]
\[
=\int_{G^{n-1}}\left(\int_{\mathbb{R}}(R_{y_{2}}\circ\cdots\circ R_{y_{n}}\phi)(e^{t_{1}x_{1}})f_{1}(t_{1})dt_{1}\right)d\mu_{2}\ldots d\mu_{n}
\]
\[
=\int_{G^{n-2}}\left(\int_{\mathbb{R}^{2}}(R_{y_{3}}\circ\cdots\circ R_{y_{n}}\phi)(e^{t_{1}x_{1}}e^{t_{2}x_{2}})\cdot f_{1}(t_{1})f_{1}(t_{1})dt_{1}dt_{2}\right)d\mu_{3}\ldots d\mu_{n}
\]

\[
\cdots=\int_{\mathbb{R}^{n}}\phi(e^{t_{1}x_{1}}\ldots e^{t_{n}x_{n}})f_{1}(t_{1})\ldots f_{n}(t_{n})dt_{1}\ldots dt_{n}
\]
\[
=\int_{V\subset G}\phi(g)(f\circ\theta_{1})(g)\ldots(f\circ\theta_{n})(g)\cdot J(g)dg
\]
\[
=\int_{G}\phi(g)(f\circ\theta_{1})(g)\ldots(f\circ\theta_{n})(g)\cdot J(g)dg
\]
where $dg$ is the Haar measure on $G$ and $J$ is the Jacobian arising
due to change of measure. Thus, the distribution $\mu_{x_{1},f_{1}}*\cdots*\mu_{x_{n},f_{n}}$
is given by $f\circ\theta_{1}(g)\cdots f\circ\theta_{n}(g)\cdot J(g)$
which is a test function on $G$. 
\end{proof}
\begin{rem}
The expression used above for the convolution can be seen for convolution
of two test functions $f,g$ by looking at the integral 
\[
\left(f*g\right)(\phi)=\int_{G}\int_{G}f(xy^{-1})g(y)\phi(x)dydx
\]
 
\[
=\int_{G}\int_{G}f(x)g(y)\phi(xy)dxdy\hspace{1em}\text{(Fubini and }x\to xy\text{)}
\]
 and for $n$ functions induction gives the desired expression. For
general compactly supported distributions $\lambda_{1},\ldots,\lambda_{n}$
the assertion is $\left(\lambda_{1}*\cdots*\lambda_{n}\right)(\phi)=(\lambda_{1}\otimes\cdots\otimes\lambda_{n})\left(\phi\circ M^{n}\right)$
where $M^{n}:G^{n}\to G$ given by $(x_{1},\ldots,x_{n})\mapsto x_{1}\cdots x_{n}$,
the tensor product is in the sense of the Schwartz kernel theorem,
and $\lambda_{1}\otimes\cdots\otimes\lambda_{n}$ is a distribution
on $G^{n}$. For measures this will give the expression used above. 
\end{rem}

\subsection{Proof of the Dixmier-Malliavin theorems}

Let $\mathcal{D}(G)$ be the space of smooth functions with compact
support on a real Lie group $G$. 
\begin{thm}
(Dixmier-Malliavin) Every element $\phi\in\mathcal{D}(G)$ is a finite
sum of convolutions $\phi_{i}*\psi_{i}$ where $\phi_{i},\psi_{i}\in\mathcal{D}(G)$
\end{thm}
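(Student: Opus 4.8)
The plan is to reduce the Lie group statement to the real-line case (Theorem \ref{thm: Schwartz} and Lemma \ref{lem: bounds test}) by means of the measure-convolution trick encapsulated in Lemma \ref{lem:convolution of distribution}. First I would fix $\phi\in\mathcal{D}(G)$ and choose a coordinate neighborhood $V$ of the identity together with the map $\Theta$ and coordinates $\theta_1,\dots,\theta_n$ as set up before Lemma \ref{lem:convolution of distribution}; by a partition of unity on the compact support of $\phi$ it suffices to treat $\phi$ supported in a single right translate of a small neighborhood $W$ of $1$ with $W^n\subset V$, since a finite sum of functions each factoring as a finite sum of convolutions again factors that way, and right translation commutes with convolution on the left variable.

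Next, writing a function supported near $1$ in the coordinates $(t_1,\dots,t_n)=\Theta^{-1}(g)$, I would expand $\phi$ (up to the Jacobian $J$) as a product $g\mapsto \prod_{i=1}^{n}(f_i\circ\theta_i)(g)$ where each $f_i\in\mathcal{D}(\mathbb{R})$ has small support — concretely one takes $\phi(g)J(g)^{-1}$, which is smooth and compactly supported in $V$, multiplies by a product of one-dimensional bump functions in the $t_i$ that are identically $1$ on its support, so that on the support of $\phi$ the function is literally $\prod_i f_i(\theta_i(g))$ with suitable $f_i$; then Lemma \ref{lem:convolution of distribution} identifies $\prod_i (f_i\circ\theta_i)\cdot J$ with the convolution of measures $\mu_{x_1,f_1}*\cdots*\mu_{x_n,f_n}$. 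Thus $\phi$ itself is (a finite sum of terms, each) such an iterated convolution of compactly supported measures on $G$.

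Now I would bring in the one-dimensional factorization. Each $\mu_{x_i,f_i}$ is the pushforward of the measure $f_i(t)\,dt$ on $\mathbb{R}$ under $t\mapsto e^{tx_i}$; applying Lemma \ref{lem: bounds test} (with bounds $B_n$ chosen so that $\sum_j(-1)^jb_j f_i^{(2j)}$ converges in $\mathcal{D}(\mathbb{R})$, which is possible by the diagonal Lemma \ref{lem:diagonal} applied to the finitely many $f_i$ simultaneously) we write $f_i = \Phi_i * g_i - h_i*f_i$ with $\Phi_i,g_i,h_i\in\mathcal{D}(\mathbb{R})$ of small support. The key point is that the homomorphism $t\mapsto e^{tx_i}$ turns convolution on $\mathbb{R}$ into convolution of the associated measures on $G$: $\mu_{x_i,u*v}=\mu_{x_i,u}*\mu_{x_i,v}$, and $\mu_{x_i,u^{(2j)}}=\widetilde{X_i}^{2j}\mu_{x_i,u}$ where $\widetilde{X_i}$ is the left-invariant differential operator corresponding to $x_i$ — so the one-dimensional identity $\sum_j(-1)^jb_j f_i^{(2j)}\to\delta+h_i$ becomes, after pushing forward, $\sum_j (-1)^j b_j \widetilde{X_i}^{2j}\mu_{x_i,f_i}\to \delta_1 + \mu_{x_i,h_i}$ on $G$. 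Since $\delta_1$ is the identity for convolution, and $\mu_{x_i,f_i}*\delta_1=\mu_{x_i,f_i}$, substituting this resolution of $\delta_1$ into one slot of the product $\mu_{x_1,f_1}*\cdots*\mu_{x_n,f_n}$ peels off a smooth factor $\Phi_i*g_i$ at the cost of a correction term $h_i*(\cdots)$; iterating over $i=1,\dots,n$ expresses $\phi$ as a finite sum in which every summand is a convolution of two objects each of which, by Lemma \ref{lem:convolution of distribution} again, is integration against a test function on $G$.

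The main obstacle I anticipate is bookkeeping: making precise that convergence of the one-dimensional series in $\mathcal{D}(\mathbb{R})$ yields convergence of the pushed-forward series in $\mathcal{D}(G)$ (uniformly in the relevant seminorms, using that $\widetilde{X_i}$ applied to $\mu_{x_i,u}$ is $\mu_{x_i,u'}$ and that $\Theta$ is a fixed diffeomorphism on $V$), and keeping the supports inside $W$ throughout so that Lemma \ref{lem:convolution of distribution} stays applicable at each stage — here the scaling freedom in Lemma \ref{lem: bounds test} to shrink supports is what saves us, since after $n$ iterations supports multiply at most $n$-fold. Everything else — the partition of unity, the product-of-bumps trick, and the algebra of substituting $\delta_1+\mu_{x_i,h_i}$ for $\delta_1$ — is routine once this continuity-of-pushforward point is nailed down. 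I would close by noting that part (b) of Theorem \ref{thm:intro DixMal}, the Fréchet representation statement, follows from part (a) by the same substitution argument applied to $\pi(\mu_{x_i,f_i})v$, using that $\pi(\mu_{x_i,u})=\int u(t)\pi(e^{tx_i})\,dt$ and that $\pi(\delta_1)=\mathrm{id}$, together with the fact that $\sum_j(-1)^jb_j d\pi(x_i)^{2j}$ applied to a smooth vector converges because the series converges in $\mathcal{D}(\mathbb{R})$ and $t\mapsto\pi(e^{tx_i})v$ is a smooth $V$-valued curve.
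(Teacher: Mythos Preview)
Your second paragraph contains a genuine error that breaks the argument. You claim that for $\phi$ supported in a small coordinate chart one can write $\phi(g)J(g)^{-1}$ as a product $\prod_{i} f_i(\theta_i(g))$ of one-variable functions, by ``multiplying by a product of one-dimensional bump functions that are identically $1$ on its support.'' But multiplying by such bump functions leaves $\phi$ unchanged; it does not turn an arbitrary smooth function of $(t_1,\dots,t_n)$ into a tensor product $f_1(t_1)\cdots f_n(t_n)$. A generic test function on $\mathbb{R}^n$ simply does not factor this way (take, for instance, $(t_1+t_2)$ times a bump on $\mathbb{R}^2$), so Lemma~\ref{lem:convolution of distribution} cannot be invoked to identify $\phi$ itself with $\mu_{x_1,f_1}*\cdots*\mu_{x_n,f_n}$. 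This is the step on which the rest of your proposal rests.

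The paper avoids this by never trying to factor $\phi$ as a pure product of line-measures. Instead it starts with an arbitrary $\varphi\in\mathcal{D}(G)$, uses the one-dimensional identity $\sum_j(-1)^j b_j f^{(2j)}\to\delta+h$ (with $b_j$ chosen via the diagonal Lemma~\ref{lem:diagonal} so that $\sum_j(-1)^j b_j x_1^{2j}\varphi$ converges in $\mathcal{D}(G)$) to write
\[
\varphi=\mu(x_1,f)*\Phi-\mu(x_1,h)*\varphi,
\]
with $\Phi=\sum_j(-1)^jb_j x_1^{2j}\varphi\in\mathcal{D}(G)$, and then iterates this for $x_2,\dots,x_n$ on each of the two summands. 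After $n$ steps every term has the form $\mu(x_1,f_1)*\cdots*\mu(x_n,f_n)*\alpha$ with $\alpha\in\mathcal{D}(G)$; only now does Lemma~\ref{lem:convolution of distribution} apply, to the block $\mu(x_1,f_1)*\cdots*\mu(x_n,f_n)$, yielding a test function which is then convolved with the genuinely $n$-dimensional factor $\alpha$. The push-forward mechanics you describe in your third paragraph are correct and are exactly what the paper uses; the difference is that the paper keeps a residual $\mathcal{D}(G)$-factor $\alpha$ throughout rather than attempting to absorb $\phi$ entirely into the line-measures at the outset.
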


\begin{proof}
We begin with an observation. Let $\{\gamma_{1},\ldots,\gamma_{m}\}$
be a basis for the Lie algebra $\mathfrak{g}$ of $G$ such that 
\[
(t_{1},\ldots,t_{m})\to\exp(t_{1}\gamma_{1})\ldots\exp(t_{m}\gamma_{m})
\]
 is a diffeomorphism of $(-1,1)^{m}$ to a neighborhood of $1\in G$.
Let $\{\alpha_{j}\}$ be a basis of the universal enveloping algebra
$U\mathfrak{g}$, and put 
\[
M_{\ell n}=\sup_{g\in G}|\alpha_{\ell}x_{1}^{2n}\phi(g)|\text{ \ \ \  (where \ensuremath{\alpha_{\ell}x_{1}^{2n}\in U\mathfrak{g}})}
\]
 By a diagonal argument above we can choose $b_{j}$ positive such
that $\sum_{j}b_{j}M_{j\ell}<+\infty$ for all $\ell$. This will
show that various summations below will converge in the space of test
functions. 

Given $g\in\mathcal{D}(\mathbb{R})$ and $x\in\mathfrak{g}$ define
a measure $\mu=\mu(x,g)$ on $G$ by 
\[
\int_{G}Fd\mu=\int_{\mathbb{R}}F(e^{tx})g(t)dt\hspace{1em}\hspace{1em}\hspace{1em}\text{ for }F\in C_{c}^{0}(G)
\]
 By lemma \ref{lem: bounds test} with bounds $b_{j}$ we can find
$f,h\in\mathcal{D}$ such that 
\[
\sum_{0\le j\le n}(-1)^{j}b_{j}f^{(2j)}\to\delta+h
\]
 as $n\to\infty$ in $\mathcal{D}'$. For $x\in\mathfrak{g}$,
\[
\left(\sum_{0\le j\le n}(-1)^{j}b_{j}x^{(2j)}\right)\cdot\mu(x,f)\to\delta+\mu(x,h)
\]
Given $\varphi\in\mathcal{D}(G)$, on one hand
\[
\left(\left(\sum(-1)^{j}b_{j}x^{(2j)}\right)\cdot\mu(x,f)\right)*\varphi\to\varphi+\mu(x,h)*\varphi
\]
On the other hand,

\[
\left(\left(\sum(-1)^{j}b_{j}x^{(2j)}\right)\cdot\mu(x,f)\right)*\varphi\to\mu(x,f)*\Phi
\]
where $\Phi=\sum_{j\ge0}(-1)^{j}b_{j}\varphi^{(2j)}$. Thus, 
\[
\varphi=-\mu(x,h)*\varphi+\mu(x,f)*\Phi
\]
If $\{x_{1},\ldots,x_{n}\}$ is a basis for $\mathfrak{g}$ then by
applying the above to $\varphi$ and $\Phi$ on the right $(n-1)$-times
we can write $\varphi$ as a linear combination of terms of the form
$\mu(x_{1},f_{1})*\ldots*\mu(x_{n},f_{n})*\alpha$ where $f_{i}\in\mathcal{D}(\mathbb{R})$,
$\alpha\in\mathcal{D}(G)$ and $\mu(x_{i},f_{i})$ are compactly supported
measures with support contained in $\{\exp(tx_{i}):t\in\mathbb{R}\}$.
We may further make sure that support of $f_{i}$ and hence $\mu(x_{i},f_{i})$
are arbitrarily small so that computations can be done in local coordinates.
By lemma \ref{lem:convolution of distribution} above about convolution
of certain compactly supported measures, $\mu(x_{1},f_{1})*\ldots*\mu(x_{n},f_{n})$
is a test function. 
\end{proof}
\begin{thm}
(Dixmier-Malliavin theorem for representations) Let $(\pi,V)$ be
a smooth Fr\'echet space representation of a real Lie group $G$ and
let $V^{\infty}$ be the Fr\'echet space of smooth vectors in $V$.
Then every smooth vector can be written as a finite linear combination
of vectors of the form $\pi(f)v$ where $f\in\mathcal{D}(G)$ and
$v\in V^{\infty}$. 
\end{thm}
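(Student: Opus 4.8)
The plan is to mimic the proof of the scalar-valued (group) Dixmier--Malliavin theorem just completed, replacing convolution against a test function on $G$ by the operators $\pi(\mu)$ attached to compactly supported distributions, and tracking that everything lands in $V^\infty$. First I would recall the basic dictionary: for a smooth Fr\'echet representation $(\pi,V)$, differentiation along $x\in\mathfrak g$ gives a continuous operator $d\pi(x)\colon V^\infty\to V^\infty$, and for a compactly supported distribution $\lambda$ on $G$ the operator $\pi(\lambda)$ is defined and maps $V\to V^\infty$ (indeed $\pi(f)$ for $f\in\mathcal D(G)$ does, and $\pi(d\pi(x)^{2j}$-type combinations of such) behaves well); in particular $\pi(\mu(x,f))v=\int_{\mathbb R}f(t)\,\pi(e^{tx})v\,dt$ makes sense for $v\in V^\infty$ and is a convergent $V^\infty$-valued integral. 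The key identity, exactly as in the group case, is that applying the operator $\sum_{j}(-1)^j b_j\,d\pi(x)^{2j}$ to $\pi(\mu(x,f))v$ equals $\pi\bigl(\bigl(\sum_j(-1)^jb_j x^{(2j)}\bigr)\cdot\mu(x,f)\bigr)v$, which by Lemma~\ref{lem: bounds test} converges, as $n\to\infty$, to $\pi(\delta)v+\pi(\mu(x,h))v=v+\pi(\mu(x,h))v$.

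Next I would run the same two-sided evaluation. On one side, $\sum_{0\le j\le n}(-1)^jb_j\,d\pi(x)^{2j}\bigl(\pi(\mu(x,f))v\bigr)\to v+\pi(\mu(x,h))v$ in $V^\infty$. On the other side, moving the differential operator past $\pi(\mu(x,f))$, it falls on $v$, giving $\pi(\mu(x,f))\bigl(\sum_{0\le j\le n}(-1)^jb_jd\pi(x)^{2j}v\bigr)$; here one chooses the bounds $B_j$ (feeding them into Lemma~\ref{lem: bounds test}) via the diagonal argument of Lemma~\ref{lem:diagonal} applied to the countable family of seminorms $v\mapsto p_\ell(d\pi(\alpha_\ell)d\pi(x)^{2j}v)$ defining the topology of $V^\infty$, so that $w_x:=\sum_{j\ge0}(-1)^jb_j\,d\pi(x)^{2j}v$ converges in $V^\infty$. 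Continuity of $\pi(\mu(x,f))$ on $V^\infty$ then gives $v=-\pi(\mu(x,h))v+\pi(\mu(x,f))w_x$. Iterating this over a basis $x_1,\dots,x_n$ of $\mathfrak g$ — at each stage applying the identity in the next variable to both summands, exactly as in the group proof — expresses $v$ as a finite linear combination of vectors $\pi(\mu(x_1,f_1))\cdots\pi(\mu(x_n,f_n))w=\pi(\mu(x_1,f_1)*\cdots*\mu(x_n,f_n))w$ with $w\in V^\infty$ and the $f_i$ of arbitrarily small support, plus terms already of the form $\pi(\mu(x_i,h_i))*\cdots\,(\text{vector in }V^\infty)$ which are absorbed into the induction. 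Finally, by Lemma~\ref{lem:convolution of distribution}, each $\mu(x_1,f_1)*\cdots*\mu(x_n,f_n)$ is integration against a genuine test function $F_i\in\mathcal D(G)$, so $\pi(\mu(x_1,f_1)*\cdots*\mu(x_n,f_n))w=\pi(F_i)w$, and we are done.

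The step I expect to be the main obstacle is the functional-analytic bookkeeping that makes the diagonal argument legitimate: one must check that the topology of $V^\infty$ is given by a \emph{countable} family of seminorms of the form $v\mapsto p(d\pi(\beta)v)$ with $\beta$ ranging over a basis of $U\mathfrak g$ and $p$ over a defining family for $V$ (which is where ``Fr\'echet'' is used), and that the series $\sum_j(-1)^jb_j\,d\pi(x)^{2j}v$ is Cauchy in each such seminorm once the $b_j$ are chosen small enough against the bounds $M_{\ell,j}=p_\ell(d\pi(\alpha_\ell)d\pi(x)^{2j}v)$ — this is the representation-theoretic analogue of the estimate $\sum_j B_j\sup_x|x|^m|\phi^{(2j+n)}|<\infty$ in the proof of Theorem~\ref{thm: Schwartz}. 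A secondary point requiring care is verifying that $\pi(\lambda)$ maps $V^\infty$ continuously to $V^\infty$ for the compactly supported distributions $\lambda$ that arise (the $\mu(x,f)$ and their convolutions), and that the intertwining relation $d\pi(x)\circ\pi(\mu(x,f))=\pi\bigl(x^{(1)}\cdot\mu(x,f)\bigr)$ holds — both follow from differentiating under the integral sign in the $V^\infty$-valued integral defining $\pi(\mu(x,f))$, using that $t\mapsto\pi(e^{tx})v$ is a smooth $V^\infty$-valued curve, but should be stated explicitly.
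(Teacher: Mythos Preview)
Your proposal is correct and follows essentially the same route as the paper: choose the $b_j$ by a diagonal argument against the seminorms $|\pi(X_k x^{2n})v|_\ell$, invoke Lemma~\ref{lem: bounds test} to produce $f,h$, derive the identity $v=-\pi(\mu(x,h))v+\pi(\mu(x,f))w_x$, iterate over a basis of $\mathfrak g$, and finish with Lemma~\ref{lem:convolution of distribution}. Your write-up is in fact slightly more careful than the paper's in insisting that $w_x=\sum_j(-1)^jb_j\,d\pi(x)^{2j}v$ converge in $V^\infty$ (not merely in $V$), which is exactly what the iteration over the remaining basis vectors requires and is what the choice $M_{n,k,\ell}=|\pi(X_kx^{2n})v|_\ell$ is designed to guarantee.
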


\begin{proof}
Let $|.|_{\ell}$ be a countable semi-norms on $V$ corresponding
to the topology on $V$. Let $\{x_{j}\}$ be a basis for $U\mathfrak{g}$,
the universal enveloping algebra of $\mathfrak{g}$ and set
\[
M_{n,k,\ell}=|\pi(X_{k}x^{2n})v|_{\ell}
\]
 By a diagonal argument similar to the one above we may choose $b_{n}$
such that $\sum_{n}b_{n}M_{n,k,\ell}<\infty$ for all $k,\ell$. By
lemma \ref{lem: bounds test} with bounds $b_{j}$ we can find $f,h\in\mathcal{D}$
such that $\sum_{0\le j\le n}(-1)^{j}b_{j}f^{(2j)}\to\delta+h$ as
$n\to\infty$ in $\mathcal{D}'$. We have
\[
\pi(\mu_{x,f})*\left(\sum_{0\le j\le n}(-1)^{j}b_{n}\pi(x^{2n})\right)\cdot v\to v+\pi(\mu_{x,h})v
\]
 in $V$. By the above considerations $\sum_{0\le j\le n}(-1)^{j}b_{n}\pi(x^{2n})v$
has a limit $\eta$ in $V$ and then $\pi(\mu_{x,f})\eta=v+\pi(\mu_{x,h})v$.
As in the previous theorem $v$ is a linear combination of vectors
of the form $\pi(\mu_{x_{1},f_{1}}*\cdots*\mu_{x_{n},f_{n}})w$ for
some $w\in V$. Again by using lemma \ref{lem:convolution of distribution}
about convolution of compactly supported measures the theorem follows. 
\end{proof}
\begin{rem}
The space of test functions on a non-compact manifold with the usual
colimit topology are \emph{never} Fr\'echet spaces since they are a
countable union of closed sets with empty interior and hence would
violate the conclusions of the Baire category theorem. Thus, Dixmier-Malliavin
theorem for representations does not directly imply the result for
test functions. Smooth vectors in non-Fr\'echet space representations
are more subtle. For example, for the regular representation of $\mathbb{R}$
on $\mathcal{D}^{*}$ \emph{every} vector is a smooth vector since
distributions are infinitely differentiable. Setting up a common notation
to treat both the cases in a single theorem does not appear to be
worth the clarity lost in the process. 
\end{rem}

\medskip

\printbibliography
\end{document}